\newcommand{\ass}{\quad\mbox{as}\quad}
\newcommand{\EE}{{\mathcal E}  }
\newcommand{\tg}{{\tt g}}
\newcommand{\inn}{{\quad\hbox{in } }}
\newcommand{\onn}{{\quad\hbox{on } }}
\newcommand{\ttt}{\tilde }
\newcommand{\TT}{{\mathcal T}  }
\newcommand{\LLL}{{\mathcal L}  }
\newcommand{\nn}{ {\nabla}  }
\newcommand{\py}{ {\tt y } }
\newcommand{\pz}{ {\tt z } }
\newcommand{\pp}{ {\partial} }
\newcommand{\vp}{\varphi}
\newcommand{\OO}{{\mathcal O}}
\newcommand{\NN}{ {\mathcal N}}
\newcommand{\FF}{ {\mathcal F}}
\newcommand{\II}{ {\mathbb I}}
\newcommand{\N}{\mathbb{N}}
\newcommand{\R} {\mathbb R}
\newcommand{\cuad}{{\sqcap\kern-.68em\sqcup}}
\newcommand{\DD}{{\mathcal D}}
\newcommand{\BB }{{\mathcal B}}
\renewcommand{\a}{{\alpha}}
\renewcommand{\b}{{\beta}}
\newcommand{\foral}{\quad\mbox{for all}\quad}
\newcommand{\ve}{\varepsilon}
\newcommand{\be}{\begin{equation}}
\newcommand{\ee}{\end{equation}}
\newcommand{\la}{\lambda}
\newcommand{\equ}[1]{(\ref{#1})}
\newcommand{\omeganr} {\omega^{nr}}
\newcommand{\omegazero} {\omega^0}
\newcommand{\gf}{{g}}		
\newcommand{\metric}[1]{{{\langle #1\rangle}_m}}
\newtheorem{definition}{Definition}
\newtheorem{lemma}{Lemma}[section]
\newtheorem{prop}{Proposition}[section]
\newtheorem{theorem}{Theorem}
\newtheorem{remark}{Remark}[section]
\newcommand{\bremark}{\begin{remark} \em}
\newcommand{\eremark}{\end{remark} }
\numberwithin{equation}{section}
\begin{document}

\title[Interface dynamics in semilinear wave equations]{Interface dynamics in semilinear wave equations}


\author[M. del Pino]{Manuel del Pino}
\address{\noindent   Department of Mathematical Sciences University of Bath,
Bath BA2 7AY, United Kingdom \\
and  Departamento de
Ingenier\'{\i}a  Matem\'atica-CMM   Universidad de Chile,
Santiago 837-0456, Chile}
\email{m.delpino@bath.ac.uk}

\author[R. L. Jerrard]{Robert L. Jerrard}
\address{\noindent
Department of Mathematics,
University of Toronto, Toronto, Ontario, M5S 2E4, Canada}
\email{rjerrard@math.toronto.edu}

\author[M. Musso]{Monica Musso}
\address{\noindent   Department of Mathematical Sciences University of Bath,
Bath BA2 7AY, United Kingdom \\
and Departamento de Matem\'aticas, Universidad Cat\'olica de Chile, Macul 782-0436, Chile}
\email{m.musso@bath.ac.uk}

\begin{abstract}
We consider the wave equation $\ve^2(-\pp_t^2  + \Delta)u + f(u) = 0$ for $0<\ve\ll 1$, where
$f$ is the derivative of a balanced, double-well potential, the model case being
$f(u) = u-u^3$.  For equations of this form, we construct solutions that exhibit
an interface of  thickness $O(\ve)$ that
separates regions where the solution is $O(\ve^k)$ close to $\pm 1$, and that
is close to a timelike hypersurface of vanishing {\em Minkowskian} mean curvature.
This provides a Minkowskian analog of the numerous
results that connect the Euclidean Allen-Cahn equation and minimal surfaces
or the parabolic Allen-Cahn equation and motion by mean curvature.
Compared to earlier results of the same character, we develop a new
constructive approach that applies to a larger class of nonlinearities and
yields much more precise information about the solutions under consideration.

\end{abstract}

\maketitle

\section{Introduction}

Consider the initial value problem
\be\label{1}\left\{
\begin{aligned}
\ve^2 \Box u \, +\,  f(u )\ = &\ 0 \inn [0,T]\times \R^{n}\\
u = u_0, \quad  \pp_t u = &u_1\inn \{0\}\times \R^n
\end{aligned}\right.
\ee
where
$
\Box u  =  -\pp^2_t u   + \Delta_x u$ and  $\Delta_x u  =\sum_{i=1}^n \pp^2_{x_i} u.
$
We are interested in nonlinearities of the form
$$
f(s) = - W'(s)
$$
where  $W(s)$ is a ``balanced double-well potential'', namely a $C^\infty$ even function such that
\be\label{double} \begin{aligned} W(s)\,> & \, 0 \inn \R \setminus\{-1,1\}\,\\    W(\pm 1)\, = &\, W'(\pm 1)\, =\, 0, \\ W''(\pm 1)\, = &\,  a >0 \end{aligned} \ee
A canonical example is the wave version of the Allen-Cahn equation $$W(u) = \frac 14 (1-u^2)^2$$ sometimes called the $\phi^4$ model.

\medskip

Since the mid 70's, 
it has been accepted in the physics and cosmology literature (see for example \cite{kibble, NO, vs}) that under some circumstances, solutions of \eqref{1}
should exhibit an interface, separating
regions where $u\approx 1$ and $u\approx -1$, that approximately sweeps out
a timelike minimal surface in Minkowski space. (The timelike Minkowskian minimal surface
equation --- the condition that the mean curvature, with respect to the Minkowski
metric, vanishes identically --- is a quasilinear geometric wave equation
described below.)
Formal asymptotic arguments in support of the same picture
have been known in the applied mathematics literature for about 20 years,
see for example \cite{neu, rn}.
The first rigorous verification of this scenario appeared only
in rather recent work of the
second author and collaborators \cite{J1, galjer, es-j},
which  constructed solutions of \eqref{1}
with an interface concentrated near a timelike minimal surface.

\medskip

In this paper, we revisit this problem,
developing an entirely new approach that
yields stronger results and is likely to be more robust and flexible.
In doing so, we are largely motivated by the clear analogy between
the problem we study and the numerous
classical results concerning solutions of
the elliptic Allen-Cahn equation
\[
\ve^2\Delta u + f(u) = 0  \qquad\mbox{ in }\Omega\subset \R^n
\]
with interfaces that concentrate near (Euclidean) minimal hypersurfaces in $\Omega$.
Many proofs in the elliptic setting
fall into one of two large families:
\begin{itemize}
\item proofs involving $\Gamma$-convergence
and related ideas, see for example \cite{modica, sternberg},
which proceed by characterizing energy concentration when $0<\ve\ll1$, and
\item proofs involving Liapunov-Schmidt reduction or its variants,
ultimately relying on a linearization of the equation about
an approximate solution built around a
minimal surface.
\end{itemize}
The latter family of arguments has a number of advantages over
the former --- it is capable of providing much more precise
descriptions of the solutions being studied \cite{Pacard};
it is more readily adapted to studying solutions of finite (nonzero) Morse index;
it can be used to build entire solutions of the
Allen-Cahn equations \cite{dkw1}, including counterexamples to the de Giorgi conjecture \cite{dkw2};
it can be used to study refined phenomena such as interface
foliation \cite{dkpw,agudelo}.   


\medskip

Prior rigorous work on timelike minimal surfaces and interfaces
in solutions of \eqref{1} is more similar in spirit to the first family of elliptic
results described above --- all papers to date rely on weighted energy estimates to
show that under suitable hypotheses, energy concentrates near a timelike minimal
surface.
In this paper, by contrast,
we  aim to adapt to the hyperbolic
setting techniques from the second family of elliptic results
--- for example, linearization about a high-order  approximate solution.
Thus, our proofs may be loosely seen as hyperbolic analogs of those
in \cite{Pacard, dkw1, dkw2}.
Our results show that as with elliptic problems,
this approach yields a much sharper description of the solutions
constructed than appears to be available from energy estimates
alone. A more detailed
comparison of our results with earlier work is given
in Section \ref{S:comp}. Interfaces in the parabolic analog of Equation \equ{1},
$$ -\pp_t u  + \Delta u  +  \ve^{-2} f(u)  =  0 \inn \R^N\times (0,T) $$
located near solutions of mean curvature flow for surfaces is also a subject that has been widely treated. See
\cite{deMottoni,xc,ess,ilm}. See also \cite{gkikas} and references therein for the corresponding interface foliation problem.



\subsection{Some preliminaries}

Let  $J$ denote the $(n+1)\times (n+1)$ diagonal matrix
$$
J\ :=\  \left [ \begin{matrix} -1  & 0 & \cdots & \cdots &  \cdots & 0  \\
                  0    &   1 & 0 &\cdots    &   \cdots &    0 \\
                  0& 0 & 1 & 0 & \cdots &0 \\
                  \vdots &    \cdots   & \cdots &  \ddots  & 1& \vdots \\
                  0  &     \cdots & \cdots & \cdots &  0& 1  \end{matrix} \right ].
$$
We consider the standard  Minkowski inner product
$$
\langle  a , b  \rangle_m =  a\cdot J b, \quad a,b \in \R^{n+1},
$$
where $\cdot$ denotes the standard Euclidean inner product.

We let $\nu$ be a Minkowki unit normal vector field along $\Gamma$.
This means that $\nu$ satisfies $|\langle \nu,\nu\rangle_m |=1$ and
$\langle \nu, \tau\rangle_m = 0$ for all vectors $\tau$ tangent to $\Gamma$.
Since $J^2$ is the identity, it is easy to see that $\nu = J \bar \nu$, where $\bar \nu$ is normal to $\Gamma$ with respect to the
Euclidean inner product.

An orientable hypersurface $\Gamma$ in $\R^{n+1}$ with
Minkowski normal vector field $\nu$
is said to be  {\bf time-like} if $\metric{ \nu, \nu}  >0  \onn \Gamma $.
Normalizing $\nu$ we will always assume
$$
\metric{  \nu  , \nu  } = 1.
$$

A basic fact is that under assumptions \equ{double} there is a unique solution to the problem
\be\label{homo}
w''(\zeta)  +  f(w(\zeta)), \quad w(0)= 0,\quad  w(\pm \infty) = \pm 1.
\ee
which is defined by the relation
$$
\zeta =  \int_0^{w(\zeta)} \frac{ ds}{\sqrt{ 2 W(s)}} .
$$
$w(\zeta)$ is an odd function since $W$ is even. It satisfies
$$ w(\zeta)\to \pm 1 + O(e^{-a|\zeta|}) \ass \zeta \to \pm \infty ,
$$
and
\[
D_\zeta^kw(\zeta)= O(e^{-a|\zeta|}) \ass \zeta \to \pm\infty \qquad\mbox{ for all }k\in \N.
\]
In the case of the Allen Cahn nonlinearity $f(u)= u(1-u^2)$, we explicitly have
$$
w(\zeta)  \ =\ \tanh \left (\frac{\zeta}{\sqrt{2}}  \right ) .
$$
We will need a standard fact for the quadratic form associated to the linearization of equation \equ{homo}:  there is a
positive constant $c$ such that
for any $\psi \in H^1(\R)$ with $\int_{\R}  \psi w' = 0 $ we have
\be \label{quadratic}
Q(\psi) :=  \int_\R  |\psi'|^2 - f'(w)\psi^2 \ \ge \ c \int_\R |\psi'|^2  + |\psi|^2 .
\ee
 This estimate follows from a direct compactness argument and the identity
$$Q(\psi) =   \int_\R  {w'}^2 |\rho '|^2, \quad \psi = \rho w' . $$

\medskip
Using $w(\zeta)$ we can find a class of explicit examples to solutions with phase transition across {\em time-like planes}.
Let $\Gamma $ be a time-like hyperplane
in $\R^{n+1}$ with Minkowski normal $\nu$. Then
for any $p\in \Gamma$,  all  its points can be described as the set
\be\label{hyp}
\Gamma =  \{Y= (x,t)\in \R^{n+1} \ /\   \big\langle\, Y-p \, ,\, \nu  \,\big\rangle_m \,=\, 0 \, \}, \quad \big\langle \nu , \nu  \big\rangle=1.
\ee

We observe that all points $(x,t)\in \R^{n+1}$ can be expressed as
$$
(x,t) \ =\ p  + z\nu , \quad  (p,z)\in \Gamma\times  \R .
$$
Clearly we have
$ z=  \big\langle  (x,t) -p,\nu \big\rangle_m $. Let us consider the function
\be \label{assi} u(x,t) = w \left ( \frac z\ve  \right ), \quad   z =  \big \langle (x,t) - p, \nu \big \rangle_m \ee
We quickly check that
$$
\ve^2 {{\Box} }u(x,t) + f(u(x,t))  =   \langle \nu,  \nu   \rangle_m \, w''(\zeta)  + f( w(\zeta)) = 0, \quad \zeta = \frac z\ve,
$$
and hence $u$ solves \equ{1} with a sharp transition on $\Gamma$ between the values $-1$ and $+1$,  for suitable initial data.


\medskip

\subsection{Statement of the Main Result}\label{Statement}

Next we introduce the objects and notation necessary for the statement of our main result.

\medskip
$\bullet$  We assume that $\Gamma$  is a smooth, {\bf time-like hypersurface} in $[0,T]\times \R^{n}$   that divides
the space
$[0,T] \times \R^n$  into two disjoint open components $\OO^-$ and $\OO^+$ with $\OO^-$ being bounded.

\medskip
$\bullet$
We assume in addition that $\Gamma$ is a {\bf Minkowski minimal surface} in $\R^{n+1}$ (in the sense of Definition \ref{minimal} below), and that the velocity of $\Gamma$ vanishes at $t=0$.
We remark that the Cauchy problem for timelike minimal surfaces is studied in
\cite{Brendle, lind, mil}.

\medskip
$\bullet$ We also assume that there exists some $\delta>0$ such that
\be
(Y,z)\in \Gamma\times (-\delta, \delta) \ \ \longrightarrow \ \  (x,t) =  Y+z\nu(Y) \quad\mbox{ is injective},
\label{fer} \ee
where $\nu(Y)$ is a Minkowski normal vector field on $\Gamma$ with $$\langle \nu(Y), \nu(Y) \rangle_m =1. $$

Let us call $\NN$ the set of all points of the form \equ{fer}. For a function $\xi(x,t) $ defined on $\NN$ sufficiently smooth we write
$ D^j_Y D^l_z \xi(Y,z) $ meaning iterated directional derivatives respectively on tangent directions to $\Gamma$ at $Y$ or in $\nu$-direction.
We choose $\nu$ to be the normal pointing towards
$\OO^+$.
We let the limit phase function be
\be \label{II}{\II}(x,t)  =  \begin{cases}  - 1  & \hbox{ if  } (x,t) \in \OO^-\\   +1 &\hbox{ if  }  (x,t)\in \OO^+ \end{cases} . \ee

Our main theorem is the following.

\begin{theorem} \label{teo1}
For each $j\in \mathbb N$,
there exist initial conditions $u_0^\ve$, $u_1^\ve$ for a solution $u_\ve(x,t)$ of problem $\equ{1}$ with the property that
$$
u_\ve(x,t) \to \II(x,t) \ass \ve \to 0 \qquad\mbox{ in the $C^{j+1}$ sense}
$$
in compact subsets of  $([0,T] \times \R^n)\setminus \NN$.
Inside $\NN$ we have
$$u_\ve (x,t) =  w\left (\frac z\ve \right ) + \phi_\ve (x, t), \quad (x,t) =  Y +  z\nu(Y).  
$$
and
\be |\phi_\ve (x,t) |+ |D_x^{j+1} \phi_\ve (x,t) |+ |D_x^j \pp_t \phi_\ve (x,t) | \ \le \    C\ve. 
\label{phive.est}\ee
\end{theorem}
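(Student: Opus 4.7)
The plan is to carry out an infinite-dimensional Lyapunov--Schmidt reduction adapted to the hyperbolic setting, in the spirit of \cite{Pacard,dkw1}. First I set up Fermi coordinates $(Y,z)$ around $\Gamma$ using assumption \equ{fer}. Writing the d'Alembertian in these coordinates produces, schematically,
\[
\Box = \pp_z^2 + \Box_{\Gamma_z} - H_{\Gamma_z}(Y)\,\pp_z,
\]
where $\Gamma_z:=\{Y+z\nu(Y):Y\in \Gamma\}$ is the parallel hypersurface, $\Box_{\Gamma_z}$ is the intrinsic d'Alembertian on it (well defined since $\Gamma_z$ remains timelike for small $z$), and $H_{\Gamma_z}$ is its Minkowski mean curvature. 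The crucial identity $H_{\Gamma_0}\equiv 0$ encodes the fact that $\Gamma$ is a Minkowski minimal surface and will be responsible for the solvability of the expansion at leading order.

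Next I construct a high-order approximate solution
\[
U_\ve(x,t)=\chi(z)\Big[w(\zz)+\sum_{k=1}^{j+2}\ve^{k}\phi_k(Y,\zz)\Big]+\big(1-\chi(z)\big)\,\II(x,t),\qquad \zz=\frac{z-\ve h_\ve(Y)}{\ve},
\]
with $\chi$ a cutoff supported in $\NN$ and $h_\ve(Y)=\sum_{k\ge 1}\ve^{k-1}h_k(Y)$ a small modulation of the interface to be determined. Plugging into $\ve^2\Box u+f(u)$ and matching powers of $\ve$ yields, at each order $k$, a linear ODE in $\zz$ of the form $\phi_k''(\zz)+f'(w(\zz))\phi_k=\rR_k(Y,\zz)$, with $\rR_k$ built from $\phi_1,\ldots,\phi_{k-1},\,h_1,\ldots,h_{k-1}$ and from local Minkowskian geometric data of $\Gamma$. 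By the spectral gap \equ{quadratic}, each such ODE admits a decaying solution iff $\int_\R \rR_k(Y,\zz)\,w'(\zz)\,d\zz=0$. At leading order this compatibility reduces to $H_{\Gamma_0}(Y)\int_\R (w')^2=0$, which holds for free because $\Gamma$ is minimal; at higher orders it becomes an inhomogeneous linear wave equation on $\Gamma$ for $h_k$, of Jacobi type, solvable via standard hyperbolic theory with vanishing Cauchy data at $t=0$ because the induced Minkowski metric on the timelike hypersurface $\Gamma$ is Lorentzian. After $j+2$ iterations one obtains $\ve^2\Box U_\ve+f(U_\ve)=\OO(\ve^{j+2})$ in a suitable weighted norm, and the initial data in the theorem are taken to be $u_0^\ve:=U_\ve(\cdot,0)$, $u_1^\ve:=\pp_t U_\ve(\cdot,0)$.

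Writing $u_\ve=U_\ve+\phi_\ve$, the remainder solves $\LLL_\ve\phi_\ve=-[\ve^2\Box U_\ve+f(U_\ve)]+N(\phi_\ve)$, where $\LLL_\ve:=\ve^2\Box+f'(U_\ve)$ and $N$ is superlinear in $\phi_\ve$. The analytic heart of the proof is a uniform-in-$\ve$ a priori estimate of the form
\[
\sup_{t\in[0,T]}\!\big(\|\pp_t\phi\|_{L^2}+\|\phi\|_{H^1}\big)\le C\ve^{-M}\big(\|\LLL_\ve\phi\|_{L^2([0,T]\times \R^n)}+\|\phi(0)\|_{H^1}+\|\pp_t\phi(0)\|_{L^2}\big)
\]
for remainders satisfying, pointwise in $Y$, the orthogonality $\int_\R \phi(Y,z)\,w'(\zz)\,dz=0$. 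This would be proved by a Minkowskian energy identity: outside $\NN$ one has $f'(U_\ve)\approx -a<0$, providing Klein--Gordon-type coercivity, while inside $\NN$ one decomposes $\phi=c(Y)w'(\zz)+\phi^\perp$, so that the fiberwise application of \equ{quadratic} controls $\phi^\perp$ and the component $c(Y)$ is eliminated by the reduction --- a final small adjustment of $h_\ve$ chosen so that the projection of the error onto the zero mode vanishes. A contraction mapping in a ball of radius $\sim\ve^{j+1}$ in the corresponding weighted space then delivers $\phi_\ve$ with the decay \equ{phive.est}, the higher-derivative bounds following by running the same energy argument at the $H^{j+2}$ level and applying Sobolev embedding. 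The principal obstacle is precisely this uniform energy estimate: because $f'(w)$ is $\OO(1)$ across the $\ve$-thin interface, naive hyperbolic energy estimates lose factors of $\ve^{-1}$ at every time step, and only the combination of the fiberwise spectral gap \equ{quadratic}, the orthogonality reduction, and the timelike character of $\Gamma$ (which makes $\Box_\Gamma$ a well-posed hyperbolic operator on $\Gamma$ governing $h_\ve$) yields a bound stable as $\ve\to 0$.
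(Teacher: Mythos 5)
Your proposal is correct in broad outline and shares the paper's two main building blocks: the inductive construction of a high-order approximate solution in Fermi coordinates, with solvability at each order enforced by an orthogonality (Fredholm) condition that turns into a Jacobi-type wave equation on $\Gamma$ for the modulation $h_k$; and a uniform-in-$\ve$ energy estimate for the linearized operator obtained by decomposing the remainder near $\Gamma$ into a zero-mode component $c(Y)\,w'$ and an orthogonal part $\phi^\perp$, with the spectral gap \equ{quadratic} providing coercivity for $\phi^\perp$. The essential departure is in how the zero-mode component is treated in the nonlinear step. You propose a genuine Lyapunov--Schmidt reduction: impose the orthogonality condition $\int_\R \phi\, w'\,dz=0$ on the remainder, invert the linearized operator on that constrained space, and kill the residual zero-mode projection by a further time-dependent adjustment of $h_\ve$. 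The paper instead imposes \emph{no} orthogonality constraint on the remainder $\vp$ and does not modulate $h_\ve$ at the nonlinear level: it proves a closed energy estimate for the full operator $L_\ve$ by augmenting the natural energy with a term $\tfrac{C}{\ve}\int_{\Gamma^0}\gamma(s,\cdot)^2$, where $\gamma$ is precisely the zero-mode coefficient, and checking that this augmented quantity still obeys a Gr\"onwall inequality. This sidesteps the coupled nonlinear system (remainder equation plus wave-type modulation equation for the $h_\ve$ adjustment) that your route would require, and lets the whole nonlinear problem be solved by a single contraction mapping after differentiating to get $H^m$ estimates. Your approach is not wrong in principle, but it is substantially more delicate to carry out rigorously in the hyperbolic setting, since the modulation equation is itself a wave equation coupled to the remainder and the orthogonality constraint would have to be propagated in time; the paper's augmented-energy device is designed exactly to avoid this. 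One further technical ingredient in the paper that you do not mention is the construction of ``modified Fermi coordinates'' (Lemma \ref{L.modferm}), which interpolate between Fermi coordinates near $\Gamma$ and slices of constant $t$ farther away; this is what permits the near-field energy (expressed in Fermi coordinates, where the fiberwise spectral gap applies) and the far-field Klein--Gordon-type energy to be patched along a common family of spacelike slices with matching initial data at $t=0$.
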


The proof of Theorem \ref{teo1} involves various ingredients with a simple philosophy:
 First we obtain an expansion in powers of $\ve$ of a true solution that gives an arbitrarily algebraic high order of approximation in $\ve$. After this approximation is built, estimates for the remainder with a sufficiently good control are found. This is a delicate step in which positivity of the one-variable quadratic form associated to the linearization of the equation \equ{homo} is essential, as well as designing well-adapted systems of coordinates.

The proof provides much more precise information about the solution. In fact, for a given number $k\ge 1$  we can find a solution that near $\Gamma$
takes the form
\be\label{forma}
u_\ve (x,t) =  w\left (\frac z\ve -h^*_\ve (Y) \right ) + \phi^*_\ve (x,t) +  \vp_\ve (x,t),  \quad (x,t) =  Y +  z\nu(Y),
\ee
where $h_\ve^*, \phi^*_\ve$ are explicit functions with
$h_\ve^* = O(\ve)$, $\phi^*_\ve = O(\ve^2)$ in smooth sense, and the remainder $\vp_\ve$ satisfies
  \be |\vp_\ve (x,t) |+ |D_x^{j+1} \vp_\ve (x,t) |+ |D_x^j \pp_t \vp_\ve (x,t) | \ \le \    C\ve^k. 
\label{phive.est1}\ee
The solution described is stable in the sense that smooth perturbations of its initial condition with size $O(\ve^m)$ and sufficiently large $m$ produce a solution with the same qualitative features. This rules out exponential growth of small perturbations (which in general may happen).

\subsection{More about prior work}\label{S:comp}
As noted above, our proof of Theorem \ref{teo1}
constructs solutions $u_\ve$ whose behavior we are
able to describe  to arbitrary precision, in arbitrarily strong norms.
The best (indeed, the only) prior results construct
solutions $u_\ve$ that satisfy the weaker estimate
\[
\| u_\ve - \II\|_{L^2((0,T')\times \R^N)}\le C(T')\ve^{1/2} \qquad\mbox{ for any } T'<T,
\]
together with some weighted estimates that quantify energy concentration
around $\Gamma$. This was proved in  \cite{J1}
under the assumption that $\Gamma_0$ is a topological torus, but allowing rather general initial velocity for $\Gamma$ --- more general in fact than we consider here. The proof in \cite{galjer} assumes that $\Gamma_0$ has zero initial velocity but allows it to be an arbitrary smooth connected compact manifold, among a number of generalizations of the results in \cite{J1}.

It has been recently proved in \cite{es-j} that when $n=2$,
one can extract from the weighted energy estimates in  \cite{J1, galjer}
an estimate of the form
\[
\| u_\ve - u_\ve^*\|_{L^2((0,T')\times \R^2)}
+ \ve \| D(u_\ve - u_\ve^*)\|_{L^2((0,T')\times \R^2)}
\le C(T')\ve^{3/2} \mbox{ for any }T'<T
\]
for some $u_\ve^*$ whose description is less explicit than the one that we construct
in this paper.
This seems to be the limits of the precision
attainable by the strategy employed in prior work,
and it also seems only to be available in $2$ space
dimensions.

Another drawback of the technique of \cite{J1, galjer,es-j} is that these results
rely on standard well-posedness theory to provide solutions of \eqref{1}. This imposes
growth conditions that render these papers unable to address the canonical cubic
nonlinearity $f(u) = u(1-u^2)$ in
high dimensions. No such growth conditions are needed in this paper.

Related prior results on issues that  we do not address
include the following:
\begin{itemize}
\item In \cite{galjer},  equations like \eqref{1}, but with asymmetric nonlinearities for which there is a bias toward one of the potential wells, are shown to have solutions with an interface that approximately sweeps out a timelike hypersurface of constant (nonzero) Minkowskian mean curvature.
\item In \cite{J1}, a Ginzburg-Landau wave equation --- like \eqref{1}, but for a complex-valued function $u$, with nonlinearity $f(u) = u(1-|u|^2)$ ---  is shown to have solutions for which
energy concentrates near a codimension $2$ surface of vanishing Minkowskian mean curvature.
\item Results of a similar character are proved for the Abelian Higgs model in \cite{Cz-J},
a Ginzburg-Landau wave equation coupled to a wave equation for an electromagnetic potential,
for certain values of a coupling constant appearing in the equations.
\item A scattering result is proved in \cite{sc}  for  \eqref{1} in $\R^{1+3}$ for initial data
$(u, u_t)|_{t=0}$  a small, very smooth perturbation of  $(w(x^3), 0)$. This can be seen as an analog for \eqref{1} of results \cite{lind, Brendle}
that establish scattering  Minkowski minimal surfaces
with initial data that is a small perturbation of a motionless hyperplane.
\end{itemize}
We believe that it should be possible to strengthen at least some  of the above results by the methods that we introduce here.

\section{Construction of an approximation}

\subsection{The wave operator in Fermi coordinates}

Let us consider a general smooth, orientable  $n$-dimensional manifold $\Gamma$ embedded in $\R^{n+1}$
and let $\NN$ be a small tubular neighborhood of $\Gamma$ defined by relation \equ{fer}.
We will find an expression for the wave operator acting on functions
$u(x,t)$ with    $(x,t)\in \NN$,
$$
\Box u =  - \pp^2_t u + \Delta_x u   \inn \NN
$$
when $u$ is expressed in {\em Minkowskian Fermi coordinates} that
we introduce next. All points in $\NN$
can be uniquely represented in the form
$$
(x,t)= Y + z\nu(Y), \quad Y \in \Gamma, \quad |z| < \delta .
$$
provided that $\delta$ is taken sufficiently small.

\medskip
Let assume that $\Gamma$ is compact and parametrized by a finite number of smooth maps 
$$ y\in \Lambda_i \subset \R^n   \ \mapsto\  Y_i (y)\in \R^{n+1}, \quad i\in I, $$
so that
$$
\Gamma = \bigcup_{i\in I} Y_i(\Lambda_i).
$$
We also use the convention $y=(y_0,\ldots, y_{n-1}) $.

Define
$$
I[u] =\iint_{\R^{n+1}}  ( |\nn_x u(x,t) |^2   - |u_t(x,t)|^2  )\, dx \, dt
$$
where $u(x,t)$ is a smooth function supported sufficiently close to a compact portion of the manifold $\Gamma$. We write
$$  \nn  u(x,t)    =   \left [ \begin {matrix}\  \pp_t  u (x,t)\\   \nn_x  u (x,t)\end{matrix}   \right ]  .$$
Then
$$
I[u] =\iint_{\R^{n+1}}    \nabla  u(x,t)^T J  \nabla  u(x,t) \, dx \, dt
$$

Let us assume for the moment that $u$ is supported close to one of the coordinate patches $Y_i(\Lambda_i)$.
Let us omit the subindex $i$ in the pair $(\Lambda_i, Y_i)$ and consider  local coordinates in a neighborhood of  $Y(\Lambda)\subset \Gamma$ given by
$$
(x,t) :=  Y(y) + z\nu(y),  \quad y\in \Lambda \subset \R^n,\quad |z|<\delta
$$
where we are just  setting $\nu(y) := \nu (Y(y))$. We refer to $(y,z)$ as {\em Fermi coordinates} associated to the local coordinate system $Y:\Lambda\to \Gamma$.
Let us write
$$
v(y,z) =   u(x,t),  \quad (x,t)= Y(y) + z\nu(y)  .
$$
and
$$  \nn  v(y,z)    =   \left [ \begin {matrix}  \nn_y v(y,z) \\ \ \pp_z v(y,z)  \\   \end{matrix}   \right ]  .
$$
We use the following notation
$$
Y_a =  \pp_{y_a} Y + z\pp_{y_a} \nu  ,
 \quad =0,1,\ldots, n-1; \qquad Y_n = \nu
 $$
and
\[
g_{\a\b}(y,z) =  \langle Y_\a, Y_\b \rangle_m , \qquad \a, \b = 0,\ldots, n.
\]
We will call $g(y,z)$ the matrix of entries $[g(y,z)]_{\a\b} = g_{\a\b}(y,z)$.
We will also denote
\[
g^{\a\b}(y,z) = [g(y,z)^{-1}]_{\a\b}
\]
and
$$
g^0_{ab} (y) =   g_{ab}(y,0), \qquad a,b =0\ldots, n-1.
$$
Consistent with this, we will always tacitly assume that $\a,\b,\ldots$  run from $0$ to $n$,
and $a,b,\ldots$ run from $0$ to $n-1$, and we will sum over repeated indices.

We introduce the matrix
$$
B = \big [Y_0\cdots  Y_n \big ],   
$$
and we remark that $B^T J B = g$.
The definition of the Minkowskian normal $\nu = Y_n$
directly imply the following basic property of Fermi coordinates:
\begin{equation}
\begin{aligned}\label{basic.fermi}
g_{an} = g_{na} = \metric{Y_a,\nu} &=0\qquad\mbox{ for }a=0,\ldots, n-1,\\
g_{nn} = \metric{\nu,\nu} &= 1.
\end{aligned}
\end{equation}
From Chain's rule we find
$$
  \nn  u(x,t)  \   = \   B^{-T} \, \nn v (y,z) , \quad   (x,t) = Y(y) + z\nu(y).
$$
and hence
$$
(\nn u)^T J \nn u\ = \     (\nn v )^T  A^{-1} \, \nn v
$$
where
\be\label{op}
A = B^T J B = g 
\ee
and hence (using \eqref{basic.fermi})
$$
A^{-1} = g^{-1} =  (g^{\a\b})_{\a,\b=0}^n = \left [ \begin{matrix} (g^{ab})_{a,b=0}^{n-1} \   &\ 0  \\  0 \ & \ 1  \end{matrix} \right ].
$$
A related observation is that
\be\label{gg}
\sqrt{|\det g (y,z)|} \ =\  \left | \det \big [ B \, \big|\, \nu\,  \big ]\right |
\ee
Indeed, we have
 $$  \big | \det \big [B \, \big|\, \nu\,  \big ] \, \big |  \ =\ \sqrt{ |\det A|  }  $$
where
$A$ is the matrix in \equ{op}
and \equ{gg} readily follows.

Then we find that
\begin{align*}
I[u]
&=
 \iint_{\R^{n+1}}  \nabla v^T    g(y,z) ^{-1} \nabla v   \sqrt {\big |\det g(z) \big | } \, dy\, dz \\
 &= \  \iint_{\R^{n+1}} g^{\a\b}(y,z)\, \pp_\a v\, \pp_\b v \, \sqrt {\big |\det g(z) \big | } \, dy\, dz .
\end{align*}
Taking a test function $\vp(x,t)$ supported within the range of validity of these local coordinates we find

$$
- \frac 12 \frac d{d\lambda} I[u + \la \varphi  ]  \big |_{\lambda = 0 } =  \int_{\R^{n+1}} \Box u(x,t)\,  \varphi (x,t)\, dt\, dx
$$
hence letting $\psi (y,z) = \varphi (Y(y) + z\nu(y)) $ we find
$$
\begin{aligned}
\iint_{\R^{n+1}} \Box u(x,t)\,  \varphi (x,t)\, dt\, dx \ = &\  \iint_{\R^{n+1}}  \mathcal L [v](y,z) \,  \psi(y,z) \,\sqrt {|\det g (y,z) | } \, dy\, dz \\ = &\  \iint_{\R^{n+1}}  \mathcal L [v] \, \vp  \, dt\, dx
\end{aligned}
$$
where
\begin{equation}\label{Box.general}
 \mathcal L [v] =   \frac 1{ \sqrt {|\det g(y,z) | }}\partial_{\a} [    \sqrt {|\det g(y,z) | } g^{\a\b}(y,z) \pp_{\b} v ].
\end{equation}
Recalling the form of $(g^{\a\b})$, this simplifies to
$$
 \mathcal L [v] =   \frac 1{ \sqrt {|\det g(y,z) | }}\partial_{a} [    \sqrt {|\det g(y,z) | } g^{ab}(y,z) \pp_{b} v ]
 + \frac 1{ \sqrt {|\det g(y,z) | }}\partial_{z } ( \sqrt {|\det g(y,z)| } \partial_z v )
 $$
 and $g^{ab}(y,z) = [g(y,z)^{-1}]_{ab}$.
The following definition is in order. For a sufficiently small $z$, the wave operator associated to a time-like manifold
$$
\Gamma_z =  \{ Y+ z\nu(Y) \ /\ Y\in \Gamma\}
$$
is given by
$$
\Box_{\Gamma_z}\ = \  \frac 1{ \sqrt {|\det g(y,z) | }}\, \partial_{a} [    \sqrt {|\det g(y,z) | } g^{ab}(y,z)\,  \pp_{b} \,  ]
$$
which acts on functions of the local coordinate  $y$ for $\Gamma_z$.
The {\em mean curvature} in the Minkowski sense of the manifold $\Gamma_z$  at the point  $Y(y) + z\nu(y)$
is defined by the quantity
$$
H_{\Gamma_z}(y) \ =\   -\frac 12 \frac{\pp}{\pp z} \log |\det g(y,z)|, \quad
$$
so that correspondingly the Minkowskian mean curvature of $\Gamma$ at the point  $Y= Y(y)$ is given by
\be
H_{\Gamma}(Y) \ =\   -\frac 12 \frac{\pp}{\pp z} \log |\det g(y,z)|\, \big|_{z=0} \quad
\label{mean}\ee
For a function $f$ defined on $\Gamma$ we will write indistinctly $f(Y)$ or $f(y)$ when $Y= Y(y)$, with reference to local coordinates.

In summary, we have proven the validity of the formula
\be\label{exp}
\Box  =  \Box_{\Gamma_z}   + \partial^2_z    -  H_{\Gamma_z} \partial_z\ .
\ee

\medskip
At this point we establish the key definition.
\begin{definition} \label{minimal}
A time-like hypersurface $\Gamma$ in $\R^{n+1}$ is said to be {\bf minimal}  in the Minkowski sense if  its Minkowski mean curvature given by \equ{mean} vanishes:
\be
H_{\Gamma}(Y) \ =\  0\foral Y\in \Gamma.
\label{mean1}\ee

\end{definition}

In what follows we will always assume that $\Gamma$ is minimal. We can then write
\be\label{H}
H_{\Gamma_z}(y) = 
 z\, {\bf a}_\Gamma (y)  + z^2{\bf b} _\Gamma (y,z)
\ee
The justification of the notation $\Box_{\Gamma_z}$ comes from the fact that the matrix
$
g (y,z)
$
defining the metric, and hence the operator in local coordinates has all positive eigenvalues except one which is negative.
This is a consequence of the time-like character of the surface $\Gamma$. To see this, we check that in the case of a non-vertical time-like plane in $\R^{n+1}$. We can parametrize it in the form
$$
(x,t) =  ( \alpha\cdot  y , y ), \quad y=(y_1,\ldots, y_n).
$$
where $\alpha =(\alpha_1,\ldots, \alpha_n)$.
A normal vector to this plane is $(-1,\alpha)$ and the time-like character clearly corresponds to the relation
$|\alpha|^2 > 1$. In this case we directly compute
$$
(B^T J B)_{ij}    = \delta_{ij} - \alpha_i\alpha_j.
$$
We see that $1$  is an eigenvalue of this matrix with multiplicity $n-1$, while its trace is negative because of the time-like condition. Hence in addition, exactly one negative eigenvalue is present.  It follows from this fact and the compactness of $\Gamma$ that
(after shrinking $\delta$ if necessary)
\be\label{detg.negative}
\det g(y,z) \le - c < 0 \quad\mbox{ everywhere in }\Gamma\times (-\delta, \delta).
\ee
We will introduce in \S \ref{coords} local coordinates under which $\Box_\Gamma$  truly becomes a wave operator.

\subsection{Shift of coordinates and construction}\label{ss:approximation}
Our purpose is to find a good approximate solution for  the equation
\be\label{eqq}
S(u) :=  \ve^2 \Box u+ f(u) = 0\quad
\ee
valid in a small $\ve$-independent neighborhood of the manifold $\Gamma$, that has a sharp transition layer near $\Gamma$.
More precisely, let us consider a heteroclinic  $w(\zeta)$ as defined in \eqref{homo}.
Taking into account expression \equ{exp} for the wave operator and \equ{H}, we see that equation \equ{eqq}
can be written as
$$
S(u)= \ve^2 \pp^2_z u +  f(u)  +  \ve^2 \Box_{\Gamma_z} u- \ve^2( {\bf a}_{\Gamma}  + z {\bf b}_\Gamma ) z\pp_z u = 0  ,\quad
$$
where we write  $$ u(y,z)= u(x,t)   \quad \hbox{ for } (x,t) = Y(y) + z\nu(y). $$
We take as a first approximation, in the small neighborhood of $\Gamma$, $|z|< \delta$,  $u_0 (x,t) = w\left ( \frac z\ve \right ) $.
In that region we get
$$
S(u_0)= -  \ve^2( {\bf a}_{\Gamma}(y)  + z {\bf b}_\Gamma(y,z) ) \zeta \pp_{\zeta} w(\zeta)\,|_{\zeta =\ve^{-1}z}  = O(\ve^2 e^{-\frac {a|z|}\ve}) .\quad
$$
To be observed is that the fact that $\Gamma$ is a Minkowskian minimal surface yields that the order of approximation on the interface is
$\ve$ times better. As we will see, more than this: is will be possible to slightly modify $u_0$ so that the order of approximation is
$ O(\ve^k e^{-\frac {|z|}\ve})$ for any given $k\ge 2$.
Indeed, as we will see one can find  a function $u^k(y,z)$ that achieves this property in the region $|z|<\delta$ for a small $\delta$  with the form
\be\label{ansatz1}
 u^k(x,t) =  v^k(y,\zeta), \qquad  \zeta = \frac z \ve  - h^k(y), \qquad (x,t) = Y(y) + z\nu(y)
\ee
where
\be\label{ansatz2}
v^k(y, \zeta) =  w(\zeta ) + \phi^k(y,\zeta), \quad \phi^k(y,\zeta) = O(\ve^2 e^{-|\zeta|} ) .
\ee
For a given, sufficiently small function $h$ defined on $\Gamma$ and a function $v(y,\zeta)$ of  the form
$$  u(x,t) =  v( y,  \ve^{-1}z-  h(y) )  , \quad (x,t) =  Y(y) +  z\nu(y)   .  $$
we compute
$$
\begin{aligned}
S(u)\  = & \ \ve^2 \Box u(x,t)+ f(u(x,t)) \, =\,  S(v,h)    \end{aligned}
$$
where
$$
\begin{aligned}
 S(v,h) \ := & \  \pp^2_\zeta v(y,\zeta) +  f( v(y,\zeta)) \,+\,  \LLL_\ve(z , h) [v](y,\zeta) \, \big|_{z= \ve(\zeta +  h(y)) },\\
\LLL_\ve(z , h) [v]  \  =  &\
  \ve^2 \Box_{\Gamma_z} v   -   \ve^2 \Box_{\Gamma_z} h\, \pp_\zeta v  -  \ve z ({\bf a}_{\Gamma} + z{\bf b}_{\Gamma})  \pp_\zeta v \\
& + \ve^2\langle \nn_{\Gamma_z}h , \nn_{\Gamma_z}h \rangle \pp^2_\zeta v    -
2\ve^2\langle \nn_{\Gamma_z}\pp_\zeta v , \nn_{\Gamma_z}h \rangle \,
\end{aligned}
$$
and we have denoted, for functions $h_1(y)$, $h_2(y)$,
$$\langle \nn_{\Gamma_z} h_1(y), \nn_{\Gamma_z} h_2(y)\rangle \,\  =\  g^{ab}(y,z) \pp_a h_1(y) \pp_b h_2(y)  .  $$

\medskip
To construct a first approximation,
we let $v^0(y, \zeta)  = w(\zeta) +\phi^0(y,\zeta)$.
Choosing $h=0$ we get
\be\label{poto3}
\begin{aligned}
S(v^0,0)\  & =  \ \pp_\zeta^2\phi^0   +  f'(w)\phi^0   - \ve^2 {\bf a}_{\Gamma} (y)\zeta w'(\zeta)  - \, \ve^3 {\bf b}_\Gamma(y,\ve \zeta)\zeta^2  w'(\zeta)\\
&\qquad
 +\, \ve^2\Box_{\Gamma_{\ve\zeta}}\phi^0
 -  \ve^2 \left({\bf a}_{\Gamma} (y)\zeta   - \, \ve {\bf b}_\Gamma(y,\ve \zeta)\zeta^2 \right)\pp_\zeta \phi^0
  + N(\phi^0)   ,
\end{aligned}
\ee
\[
\]
where
$$
N(\phi)\  =  \  f(w(\zeta)  + \phi )- f(w(\zeta) ) -  f'(w(\zeta) )\phi. $$
A basic property that we will use is that the equation
$$
p''(\zeta)  +  f'(w(\zeta))p(\zeta) + q(\zeta)   =0 , \quad \zeta\in [-R,R]
$$
has the solution
\be\label{formulita}
p(\zeta) =    \mathcal T  [ q(\zeta)] :=  w'(\zeta) \int_{-R}^\zeta  w'(s)^{-2}  \left( \int_{-R}^s q(\tau) w'(\tau) d\tau \right) ds.
\ee
We have that if
\be\label{or}
\int_{-R}^R q(\tau) w'(\tau) d\tau  =0 ,
\ee
and for $j\ge 0$
$$
\quad   |D^j q(\zeta)|\ \le  \ (1+|\zeta|^m) e^{-a|\zeta|} ,\quad \zeta\in [-R,R],
$$
then
\be\label{estim}
|D^j p(\zeta)|\ \le  \ C_j\, (1+|\zeta|^{m+1}) e^{-a|\zeta|},\quad \zeta\in [-R,R].
\ee
with $C$ uniform in all large $R$.
At this point we observe that since $w$ the  heteroclinic is odd by assumption,
$q(\zeta) = \zeta w'(\zeta) $ satisfies \equ{or} for any $R>0$.
We now let
$$
\phi^0 (y,\zeta ) = -\ve^2 {\bf a}_{\Gamma} (y) \mathcal T [\zeta w'(\zeta)].
$$
Using \equ{estim} we see that for $j,l\ge 0$ we have
$$
| D_y^lD_\zeta^j \phi^0 (y,\zeta)|\ \le  \ C_{jl}\ve^2 \, (1+|\zeta|) e^{-a|\zeta|},\quad  |\zeta|\le \frac \delta\ve .
$$
Moreover, the first three terms in expansion \equ{poto3} are identically cancelled and the resulting error
gets one order smaller. Indeed, we directly check that
$$
 | D_y^lD_\zeta^j S(v^0,0)(y,\zeta) | \  \le \ C_{jl}   \ve^3 \, (1+|\zeta|^2)e^{-a|\zeta|},\quad  |\zeta|\le \frac \delta\ve \ .
$$
This procedure can be continued inductively but involves adjusting the function $h(y)$ to get the orthogonality conditions
\equ{or} satisfied. As we will see, such an adjustment will involve an equation for $h$ that involves the Jacobi-Minkowski operator
of the minimal surface $\Gamma$. More precisely we need to solve  equations on $\Gamma$ of the form

\be \label{J1}
\begin{aligned}
J_\Gamma [h] :=   \Box_\Gamma h  +  {\bf a}_\Gamma(y)  h \, =& \, g \inn \Gamma \\
 h \,= \, \pp_t h \,= &\, 0 \onn \Gamma\cap\{t=0\}.
\end{aligned}
\ee

\begin{lemma}\label{lema1}  Let $g$ a function of class $C^\infty (\Gamma)$. Then Problem \equ{J1}
has a unique solution $h$ which is also of class $C^\infty (\Gamma)$.
Moreover for each $j\ge 0$ there are numbers $m_j$, $C_j$ such that
$$
\| D_y^j h\|_{L^\infty(\Gamma)} \le C_j \sum_{l=0}^{m_j} \| D_y^l g\|_{L^\infty(\Gamma)}.
$$
\end{lemma}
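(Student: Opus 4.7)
The plan is to treat $J_\Gamma[h] = \Box_\Gamma h + {\bf a}_\Gamma h$ as a linear, second-order hyperbolic operator on the Lorentzian manifold $(\Gamma, g|_\Gamma)$, and apply the standard Cauchy theory. By \eqref{detg.negative} the induced metric on $\Gamma$ has signature $(-,+,\ldots,+)$, so $\Box_\Gamma$ is genuinely hyperbolic, and adding the bounded zeroth-order term ${\bf a}_\Gamma$ does not alter its principal symbol. The hypothesis that the initial velocity of $\Gamma$ vanishes at $t=0$ guarantees that $\Gamma_0 := \Gamma \cap \{t=0\}$ is a spacelike slice of $\Gamma$; since $\Gamma$ is timelike throughout $[0,T]\times\R^n$, the function $t|_\Gamma$ has timelike gradient, and the family $\{\Gamma_t\}_{t\in[0,T]}$ foliates $\Gamma$ by compact spacelike hypersurfaces, with $\Gamma_0$ serving as a Cauchy surface.

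The core step is the standard energy identity obtained by multiplying $J_\Gamma[h]=g$ by $\partial_t h$ and integrating by parts over $\Gamma_t$. This yields
\[
\tfrac{d}{dt} E_0(t) \le C\bigl(E_0(t) + \|g(t,\cdot)\|_{L^2(\Gamma_t)}^2\bigr), \qquad E_0(t) := \int_{\Gamma_t} \bigl(|\partial_t h|^2 + |\nabla_{\Gamma_t} h|^2 + |h|^2\bigr)\, d\sigma_t,
\]
where coercivity of $E_0$ uses that $\Gamma_t$ is spacelike in $\Gamma$ and that the potential term is of lower order. Gr\"onwall's inequality combined with the vanishing initial data then gives $\sup_{t\in[0,T]} E_0(t) \le C\|g\|_{L^2(\Gamma)}^2$. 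Existence of a finite-energy solution follows by a Galerkin approximation scheme relying on this a priori bound, while uniqueness is immediate from the same estimate applied to the difference of two solutions.

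To upgrade to $C^\infty$ and obtain the pointwise bound, I would commute spatial derivatives (and, via the equation itself, temporal derivatives) through $J_\Gamma$. Each commutator is a first-order operator with smooth coefficients on the compact manifold $\Gamma$, so an inductive application of the same energy estimate yields, for every $s\ge 0$,
\[
\sum_{|\beta|\le s} \|D^\beta h(t,\cdot)\|_{L^2(\Gamma_t)} \ \le\ C_s \sum_{|\beta|\le s} \|D^\beta g\|_{L^2(\Gamma)}.
\]
This gives $h\in H^s$ for every $s$, hence $h\in C^\infty(\Gamma)$. Sobolev embedding on the compact $(n-1)$-dimensional slice $\Gamma_t$ then converts the $L^2$-Sobolev bound into the desired $L^\infty$ bound, with loss $m_j$ roughly $j+\lceil n/2\rceil+1$ derivatives of $g$.

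The main technical nuisance is setting up the energy identity intrinsically on $\Gamma$ rather than in a single coordinate chart. I would either cover $\Gamma$ by finitely many patches adapted to the foliation $\{\Gamma_t\}$ and glue via a partition of unity, using finite propagation speed and compactness of the slices to control cross-patch terms, or invoke the abstract existence theory for linear second-order hyperbolic equations on globally hyperbolic Lorentzian manifolds, for which $\Gamma_0$ serves as a Cauchy surface. All remaining ingredients — smoothness of the coefficients, compactness of $\Gamma_0$, and finiteness of the time horizon $T$ — are built into the standing hypotheses on $\Gamma$.
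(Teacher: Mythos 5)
Your proposal is correct and follows essentially the same route as the paper: identify the operator as hyperbolic on the Lorentzian manifold $\Gamma$, run energy estimates with Gr\"onwall, patch by a finite partition of unity, and upgrade to $C^\infty$ plus $L^\infty$ bounds by commuting derivatives and Sobolev embedding. The only thing the paper makes explicit that you leave implicit is its use of the canonical coordinate system of Lemma~\ref{lema2}, where $g^0_{0a}=0$, $g^0_{00}<0$ and $\bar g^0$ is positive definite; this block-diagonalization lets the paper rewrite the equation in each patch as a literal textbook wave equation $-\partial_t^2 h + a_{ij}\partial_{ij}h + (\text{lower order}) = Q$ with $(a_{ij})$ uniformly elliptic and zero Cauchy data, so that the existence and Sobolev regularity can be quoted directly from Evans, Section~7.2, rather than rederived via an intrinsic energy identity or a Galerkin scheme.
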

The proof consists of reducing the problem to one for a standard wave-like operator.
We postpone it for the appendix.
Our main result in this section is the following.

\begin{prop}\label{prop.as} Given $k\ge 0$
there exists smooth functions $h^k(y)$ and $\phi^k(y,\zeta)$, with $\phi^k$  defined in the set
$$
\DD= \{ (y,\zeta) \ :\ y\in \Gamma,\  - \frac \delta {2\ve} < \zeta < \frac \delta {2\ve} \},
$$
such that for all $j,l\ge 0$
\be\label{pico1}
\begin{aligned}
| D_y^lD_\zeta^j \phi^k (y,\zeta)|\ \le &  \ C_{jlk}\ve^2 \, (1+|\zeta|) e^{-a|\zeta|},\quad
| D_y^lD_\zeta^j h^k (y,\zeta)|\ \le & \ C_{jk}\ve
\end{aligned}
\ee
and for $v^k(y,\zeta) = w(\zeta) + \phi^k(y,\zeta)$
 we have
\be\label{pico2}
| D_y^lD_\zeta^j  S( v^k , h^k) |  \le     C_{ljk}\, \ve^{k+3} (1+ |\zeta|^{k+2} ) e^{-a|\zeta|}\inn \DD.
\ee
\end{prop}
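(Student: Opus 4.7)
The plan is to build $(\phi^k,h^k)$ by induction on $k$, with the base case $k=0$ given by the pair $\phi^0=-\ve^2{\bf a}_\Gamma(y)\,\mathcal T[\zeta w'(\zeta)]$, $h^0\equiv 0$ already displayed in the text, which verifies \equ{pico1}--\equ{pico2} with $k=0$. Assuming $(\phi^{k-1},h^{k-1})$ have been constructed with the analogous bounds at order $k-1$, I look for corrections $\phi^k=\phi^{k-1}+\psi_k$ and $h^k=h^{k-1}+\eta_k$, with $\psi_k=O(\ve^{k+2})$ and $\eta_k=O(\ve^k)$ in every relevant derivative norm.

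Substituting and Taylor-expanding $S(v^{k-1}+\psi_k,\,h^{k-1}+\eta_k)$ around $(v^{k-1},h^{k-1})$, I claim the decomposition
\[
S(v^k,h^k)\;=\;S(v^{k-1},h^{k-1})\;+\;\pp_\zeta^2\psi_k+f'(w)\psi_k\;-\;\ve^2 J_\Gamma[\eta_k]\,w'(\zeta)\;+\;R_k
\]
holds, where $J_\Gamma$ is the Jacobi--Minkowski operator of \equ{J1} and $R_k$ collects every higher-order contribution. The essential computation is that $J_\Gamma[\eta_k]$ really appears at leading order: the piece $-\ve^2\Box_{\Gamma_z}h\,\pp_\zeta v$ in $\LLL_\ve(z,h)[v]$, linearized in $h$ at $(z,v)=(0,w)$, contributes $-\ve^2\Box_\Gamma\eta_k\cdot w'(\zeta)$, while $-\ve z({\bf a}_\Gamma+z{\bf b}_\Gamma)\pp_\zeta v$, with $z=\ve(\zeta+h)$, contributes upon expanding in $h$ the term $-\ve^2{\bf a}_\Gamma\eta_k\cdot w'(\zeta)$; their sum is precisely $-\ve^2 J_\Gamma[\eta_k]\,w'$. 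All other contributions---the mismatch $[f'(v^{k-1})-f'(w)]\psi_k=O(\ve^{k+4})$, the genuine nonlinearity $N(\psi_k)=O(\psi_k^2)$, the quadratic piece $\ve^2\langle\nn h,\nn h\rangle\pp_\zeta^2 v$, the subleading $z$-dependence of $\Box_{\Gamma_z}$, ${\bf a}_\Gamma$ and ${\bf b}_\Gamma$, and $\LLL_\ve$ applied directly to $\psi_k$---each carry an extra factor of $\ve$ and drop into $R_k$, for which a power-counting on $\DD$ (where $\ve|\zeta|\le\delta/2$ keeps $z$ and all relevant products bounded) gives $R_k=O(\ve^{k+3}(1+|\zeta|^{k+2})e^{-a|\zeta|})$.

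To annihilate the leading error I impose $\pp_\zeta^2\psi_k+f'(w)\psi_k=-S(v^{k-1},h^{k-1})+\ve^2 J_\Gamma[\eta_k]w'(\zeta)$. Multiplying against $w'(\zeta)$ and integrating in $\zeta$, the Fredholm alternative for $\pp_\zeta^2+f'(w)$---whose $L^2(\R)$-kernel is spanned by $w'$---forces
\[
J_\Gamma[\eta_k](y)\;=\;\frac{1}{\ve^2\|w'\|_{L^2(\R)}^2}\int_\R S(v^{k-1},h^{k-1})(y,\zeta)\,w'(\zeta)\,d\zeta\;=:\;g_k(y).
\]
By the inductive bound on $S(v^{k-1},h^{k-1})$, $g_k$ is a smooth function of $y\in\Gamma$ with $\|D_y^j g_k\|_{L^\infty(\Gamma)}=O(\ve^k)$; Lemma \ref{lema1} applied to the Cauchy problem $J_\Gamma[\eta_k]=g_k$ with $\eta_k=\pp_t\eta_k=0$ on $\Gamma\cap\{t=0\}$ then produces a smooth $\eta_k=O(\ve^k)$ on $\Gamma$. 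With this choice the right-hand side of the equation for $\psi_k$ is $L^2(\R_\zeta)$-orthogonal to $w'$ pointwise in $y$, so the explicit inverse $\mathcal T$ of \equ{formulita} combined with \equ{estim} (which costs one polynomial power in $\zeta$) delivers $\psi_k=O(\ve^{k+2}(1+|\zeta|^{k+2})e^{-a|\zeta|})$, with the matching estimates for $D_y^l D_\zeta^j\psi_k$ obtained by differentiating the closed-form integral through the smooth $y$-dependence of the data.

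I expect the main difficulty to be the careful bookkeeping required to see that every term placed in $R_k$ really costs at least one additional factor of $\ve$ relative to the two terms absorbed into the leading equation: one must follow the Taylor expansions in $z=\ve(\zeta+h)$ of $\Box_{\Gamma_z}$, of ${\bf a}_\Gamma$, ${\bf b}_\Gamma$ and of the coefficients $g^{ab}(y,z)$, and simultaneously manage the polynomial weight $(1+|\zeta|^{k+2})$, which grows by one at each induction step. Because $\DD$ enforces $\ve|\zeta|\le\delta/2$, this growing polynomial is harmlessly controlled (after possibly decreasing $a$ slightly and absorbing the change into the same symbol) by the factor $e^{-a|\zeta|}$, so the inductive closure of the bounds goes through. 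A minor technical point is that $\mathcal T$ in \equ{formulita} has a finite basepoint $-R$: one takes $R$ at least of order $\delta/(2\ve)$ and checks that boundary contributions are exponentially small in $\ve^{-1}$, or equivalently passes to the $R\to\infty$ limit using exponential decay of the integrands.
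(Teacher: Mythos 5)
Your proposal is correct and follows essentially the same strategy as the paper: induction on $k$, linearization of $S(v,h)$ in the increments $(\psi_k,\eta_k)$ to extract the operator $\pp_\zeta^2+f'(w)$ plus the Jacobi--Minkowski term $\ve^2 J_\Gamma[\eta_k]w'$, then choice of $\eta_k$ via Lemma~\ref{lema1} to enforce the $L^2_\zeta$-orthogonality that makes $\mathcal T$ applicable, and inversion by $\mathcal T$ with the decay estimate \equ{estim}. (The one cosmetic discrepancy is a sign: the paper's expansion displays $+\ve^2[\Box_\Gamma h+{\bf a}_\Gamma h]\pp_\zeta w$ while a direct computation from $\LLL_\ve$ gives $-\ve^2 J_\Gamma[h]w'$ as you have; this only flips the sign of the source $g$ in \equ{J1} and is immaterial since Lemma~\ref{lema1} solves the problem for arbitrary smooth right-hand side.)
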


\begin{proof}
We proceed by induction. The case $k=0$ has just been dealt with with the choice $h^0= 0$.
Let us assume the existence of functions $h^k$, $\phi^k$ as in \equ{pico1}-\equ{pico2}. We will make a choice for   $h^{k+1}$, $\phi^{k+1}$.

\medskip
Let us consider two functions $h(y)$ and $\phi(y,\zeta )$ with the following properties: for a certain $m$
 and each numbers $j,l$, there are constants $C_{ljk}$, $C_{jk}$ such that for all sufficiently small $\ve$ we have
\begin{align}
|D^l_\zeta D_y^j \phi(y,\zeta)| \ \le & \   C_{ljk}\ve^{k+3}(1+  |\zeta|^{k+2}) e^{-a|\zeta|}, \label{coco1}\\
   | D_y^j h(y)| \ \le &\  C_{jk} \ve^{k+1}. \label{coco2}
\end{align}
We explicitly find functions that satisfy constraints of this form such that $h^{k+1} = h^k+h$ and
$\phi^{k+1}= \phi^k+\phi$  reduce the error, thus completing the induction step.

\medskip
We expand in the region  $\DD$,
\be
\begin{aligned}
 \LLL_\ve(\ve(\zeta +h^k + h ) , h^k+ h) [v^k+ \phi]  \  =  &\  \LLL_\ve(\ve(\zeta +h^k ) , h^k) [v^k] \\ &
 +  \big ( \LLL_\ve(\ve(\zeta +h^k + h ) , h^k+ h)  - \LLL_\ve(\ve(\zeta +h^k  ) , h^k)  \big)  \, [v_k] \\ & +  \LLL_\ve(\ve(\zeta +h^k + h ) , h^k+ h) [\phi]  \\ = &\quad \LLL_\ve(\ve(\zeta +h^k ) , h^k) [v^k]  +\,  \ve^2 [\Box_{\Gamma} h\,  +    {\bf a}_\Gamma h  ]  \pp_\zeta w \\
   &\ +\,   \Theta_1 (h,\phi)
\end{aligned}
\nonumber\ee
where the remainder $\Theta_1 (h,\phi)$ satisfies
\be
|D^l_\zeta D_y^j \Theta_1 (h,\phi)|\ \le \     C_{ljk}\ve^{k+4}(1+ |\zeta|^{k+3})e^{-|\zeta|}
\label{poto}\ee
for some constants relabeled $C_{lj}$. Hence we find
\be\label{poto1}
\begin{aligned}
&S_\ve(v^k+ \phi , h^k+ h)
\\\ = \ &        \pp^2_\zeta \phi  +  f'(w(\zeta))\phi  +   S_\ve(v^k, h^k) +  \ve^2 [\Box_{\Gamma} h\,  +    {\bf a}_\Gamma h  ]  \pp_\zeta w  +  \Theta (h,\phi)
\end{aligned}
\ee
where $\Theta$ satisfies an estimate of the form \equ{poto}. Next we choose the function $h$:
We consider $h(y)$ such that the following relation holds.
\be\label{or1}
\int_{-\frac \delta{2\ve}}^{\frac \delta{2\ve}} \EE(y,\zeta) \, \pp_\zeta w(\zeta)\, d\zeta\ =\ 0 \foral y\in \Gamma.
\ee
 where
$$
\EE(y,\zeta) =    S_\ve(v^k, h^k)(y,\zeta) +  \ve^2 [\Box_{\Gamma} h(y)\,  +   {\bf a}(y) h(y)  ]  \pp_\zeta w(\zeta),
$$
We can write this equation in the form
$$
J_\Gamma [h](y)= \Box_{\Gamma} h(y)\,  +   {\bf a}_{\Gamma}(y) h(y)  =  g(y) \onn \Gamma
$$
where the function  $g(y)$ satisfies that for each $j\ge 0$
$$
|D^j_y g(y)| \ \le\   C_{jk}\ve^{k+1}\inn \Gamma
$$
Assuming the initial conditions $h = \pp_t h = 0 $ on $\Gamma\cap \{t=0\} $  we see from Lemma \ref{lema1} that a unique solution $h$
of this problem exists which also satisfies a bound of the form \equ{coco2}.
Now, we choose $\phi(y,\zeta)$ to be the solution of the equation

$$
\pp_\zeta^2  \phi  +  f'(w(\tau))\phi + \EE(y,\zeta) = 0 , \quad |\zeta|< \frac\delta\ve
$$
given by
$$
\phi(y,\zeta) = \mathcal T [ \EE(y,\zeta)]
$$
with $\mathcal T$ as in \equ{formulita}. Using
Estimate \equ{estim} we get that
$\phi$ satisfies the bounds
$$
| D_y^lD_\zeta^j \phi (y,\zeta)|\ \le  \ C_{jlk}\ve^{k+ 3}\, (1+|\zeta|^{k+2}) e^{-|\zeta|},\quad  |\zeta|\le \frac \delta\ve .
$$
With these choices of $h$ and $\phi$ made, we indeed have the validity of \equ{coco1}. Hence
setting $v^{k+1}= v^k +\phi, \quad h^{k+1}= h^k+ h$ we get
$$
S(v^{k+1},h^{k+1})\ =\  \Theta (h,\phi)
$$
which satisfies bounds \equ{poto}. The induction is thus complete and the proposition follows.
\end{proof}

\subsection{The global approximation} \label{ss:global}

We have built in Proposition \ref{prop.as} an approximation to a solution of $S(u)$ =0 of the form
$$  u_\ve^k (x,t) =  w(\ve^{-1}z-  h_k(y) ) +   \phi^k( y,  \ve^{-1}z-  h_k(y) )  , \quad (x,t) =  Y(y) +  z\nu(y)   ,  $$
which is only defined in the small neighborhood $\NN$ of $\Gamma$.  We can obtain a globally defined approximation  by just interpolating with the function $\II$ defined in \equ{II} as follows.
Let us consider a smooth, nonnegative cut-off function $\eta(s)$ such that $\eta(s)= 1$ for $s<1$ and $=0$ for $s>2$, and set
\be\label{chi00}
\chi_0 (x,t)  =  \eta\left (\frac {|z|}{r}\right ) ,
\ee
where $2r< \delta$ and this function is understood as zero whenever $(x,t)$ is outside the neighborhood of $\Gamma$ of points with coordinate $|z| <\delta$ and $r$ is a sufficiently small number which we will specify at the beginning of Section \ref{sec.linear},
Then we define
\be
u^*_\ve (x,t) :=   \chi_0(x,t) u_\ve^k (x,t)  + (1- \chi_0(x,t)) {\II}(x,t)
\label{uestar}\ee
where the number $k$ will be chosen sufficiently large.

\section{Further coordinate systems}

\subsection{A canonical coordinate system in $\Gamma$ }\label{coords}

Let us consider a time-like manifold $\Gamma$ endowed with local parametrizations
$$(Y_l,\Lambda_l), \quad l=1,\ldots, m .$$
The tangent space to $\Gamma$  at the point $Y= Y_l(y)$ is the $n$-dimensional space
$$
T_Y\Gamma\, =\, {\rm Span}\, \{ \pp_iY_l (y) \ /\ i=0,\ldots, n-1\}.
$$
We denote $\Gamma^t$ the $t$-section of $\Gamma$, namely
$$
\Gamma^t = \Gamma \cap\{ (x,t) \ / x\in \R^n\}
$$
We claim that if $\Gamma^t$ is nonempty, it is a $n-1$ dimensional smooth manifold.
Indeed, let
 of $\Gamma$.
Writing
$$
Y_l(y)  = (t_l(y), x_l(y)), \quad y\in \Lambda_l  \subset \R^n.
$$
Then $\Gamma^t$ is locally parametrized by the equations $t_l(y) = t$. This set is a smooth manifold. In fact,
$ \nn_y t_l (y) \ne 0 $
In fact if  $ \nn_y t (y) =  0$ we would have that $T_Y\Gamma$ at $Y= Y_l(y)$  is just $\{0\}\times \R^n$.  Hence an Euclidean
 normal vector is $e_0 =(1,0,\ldots, 0) $. That contradicts the time-like condition.
The section $\Gamma_t $ has an $n-1$-dimensional tangent space $T_Y\Gamma_t$ contained in  $\{0\}\times \R^n$.
We consider a vector $E(Y)\in T_Y\Gamma$ which lies in the orthogonal to  $T_Y\Gamma^t$. We make the unique choice of this vector with
$$
E(Y)\cdot e_0 = 1 .
$$
The map $Y\in \Gamma \mapsto E(Y) \in T_Y\Gamma$ defines a smooth vector field on $\Gamma$ which we will use to define
a natural system of local coordinates that will be helpful for computations.

\medskip
Natural coordinates on $\Gamma$ are those associated to {\em flow lines} for the vector field $E$. These are the trajectories of the differential equation on the manifold $\Gamma$
\be
\frac {d Y} {ds }(s)  = E( Y(s)) , \quad Y(s)\in \Gamma.
\label{ss}\ee
The meaning of this equation is given by local coordinates as
$Y(s) =  Y_l (y(s)) $, where $y(s)\in \Lambda_l\subset \R^n$ solves the system of equations
$$
DY_l (y(s)) [\frac {dy} {ds}]     =      E( Y_l(  y(s)) )
$$
or equivalently the system of ODEs
$$
\frac {dy} {ds}(s)   =  F(y(s))  $$
where $$
F(y) =  [(DY_l(y) )^T(DY_l(y))]^{-1} (DY_l(y))^T E( Y_l(y)).
$$
For each point $Y^0= (0,x_0)\in \Gamma^0$,  Equation \equ{ss} has a unique solution $Y(s)$ with $Y(0)= Y^0$ which we denote as
$Y(s ,x_0) $. To be observed is that by definition of $E$, this function has the form
$$ Y(t ,x_0) = (t, X(t;x_0)) $$
where $X(0;x_0)= x_0$.

 \medskip
 Using this map we can define local coordinates on $\Gamma$ just based on
coordinates on $\Gamma^0$
We regard
$\Gamma^0 $ as a $n-1$ dimensional manifold in $\{0\}\times \R^{n}$.
 We consider a family of smooth maps
 $ X^0_l : V_l \subset \R^{n-1} \to  \R^n$
 with the functions
$$
y' = (y_1,\ldots , y_{n-1})\in V_l  \mapsto    (0,X^0_l (y')) \in \Gamma^0
$$
 defining local coordinates for $\Gamma^0$. Then the following maps define local coordinates that parametrize entire $\Gamma$.
 We let $T>0$ be any number such that $\Gamma^s $ is nonempty for all $0\le s\le T$
and define
$$ \Lambda_l =  [0,T]\times V_l, \quad l=1,\ldots, m
$$
 and consider the maps $Y_l$ defined as
 \be\label{canonical.coords}
 Y_l(y_0,y')  =     Y( y_0,   X^0_l(y') )   = (y_0,  X(  y_0,   X^0_l(y')))
 \ee

\medskip
Let us consider the Minkowski metric $g^0(y) $ associated to this parametrization, defined on $\Gamma$ as
\be\label{metric}
g_{ab}^0(y) \ := \  \langle\pp_a Y_l(y)  ,   \pp_b Y_l(y)\rangle , \quad a,b =0,\ldots n-1.
\ee

\begin{lemma}\label{lema2}
The following properties of the metric $g^0$ defined above hold:
\begin{align}
\label{11}
g_{0a}^0\  =&  \  0, \quad a=1,\ldots n-1,\\
g^0_{00}\  <& \ 0 . \quad  
\label{condi}
\end{align}
The matrix $\bar g^0$ with coefficients
$$  [\bar g^0]_{ij} =  g^0_{ij}\quad   i,j=1,\ldots n-1 $$
is positive definite.
\end{lemma}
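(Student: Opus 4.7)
My plan is to read off the three assertions directly from the canonical coordinate construction, by computing the coordinate vector fields $\partial_a Y_l$ and exploiting the Minkowskian signature of $T_Y\Gamma$. Write $Y_l(y_0,y') = (y_0, X(y_0, X_l^0(y')))$. Then differentiation gives
\[
\partial_0 Y_l(y) = (1, \partial_{y_0} X) = E(Y_l(y)), \qquad \partial_a Y_l(y) = (0, D_{x_0}X \cdot \partial_a X_l^0) \ \ (a \ge 1),
\]
where the first identity uses the flow equation \equ{ss}, and the second records that varying only $y'$ keeps $y_0$ fixed, so $\partial_a Y_l \in T_{Y}\Gamma^{y_0}$ and has zero time component.

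First I will prove \equ{11}. Since $\partial_a Y_l \in T_Y \Gamma^{y_0}$ for $a\ge 1$ and $E(Y)$ was defined to be orthogonal to $T_Y\Gamma^{t}$, we get $g_{0a}^0 = \metric{E(Y),\partial_a Y_l} = 0$. (Here Minkowski and Euclidean orthogonality agree, since $T_Y\Gamma^{t}\subset \{0\}\times \R^n$: for any $\tau=(0,\tau')$ and $v=(v_0,v')$, $\metric{v,\tau} = v'\cdot \tau'$.)

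For \equ{condi}, the key ingredient is the signature argument. The Minkowski normal satisfies $\metric{\nu,\nu}=1>0$, so $\nu$ is spacelike. Since the ambient Minkowski metric has signature $(-,+,\dots,+)$ and $T_Y\Gamma = \nu^{\perp_m}$, the restricted metric on $T_Y\Gamma$ must have signature $(-,+,\dots,+)$ in dimension $n$, i.e., exactly one timelike direction. The subspace $T_Y\Gamma^{y_0}\subset\{0\}\times \R^n$ is spacelike of dimension $n-1$ (on spatial vectors Minkowski equals Euclidean, hence the restriction is positive definite). Therefore its Minkowski-orthogonal complement inside $T_Y\Gamma$, which is one-dimensional and spanned by $E$, must carry the timelike direction, giving $g_{00}^0 = \metric{E,E} < 0$.

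Finally, for positive definiteness of $\bar g^0$, I observe that $\bar g^0_{ij}=\metric{\partial_i Y_l,\partial_j Y_l}$ for $i,j\ge 1$ is the Gram matrix of spatial vectors, for which Minkowski equals Euclidean inner product; linear independence follows from the chain rule expression $\partial_a Y_l = (0, D_{x_0}X\cdot \partial_a X_l^0)$, since $D_{x_0}X$ is invertible as the Jacobian of an ODE flow and $\{\partial_a X_l^0\}$ is linearly independent as a local parametrization of $\Gamma^0$. The only subtle point worth double-checking is the signature claim for $T_Y\Gamma$; this is essentially equivalent to the timelike character of $\Gamma$ already observed in \equ{detg.negative}, and could alternatively be derived from it by noting that $\det g^0 = g_{00}^0\cdot \det\bar g^0$ once \equ{11} is known.
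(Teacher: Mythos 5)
Your proof is correct. Your treatments of \equ{11} and of the positive definiteness of $\bar g^0$ coincide with the paper's: you note that the $\partial_a Y_l$ for $a\ge 1$ are spatial vectors so Minkowski and Euclidean orthogonality agree, and you derive linear independence of the $\partial_a X_l(y_0,\cdot)$ from invertibility of the ODE flow map (equivalent to the paper's observation that solutions of a linear ODE system preserve linear independence). The one genuine difference is in \equ{condi}. You use abstract Lorentzian linear algebra: since $\nu$ is spacelike, $T_Y\Gamma = \nu^{\perp_m}$ is Lorentzian of signature $(-,+,\dots,+)$; the spacelike $(n-1)$-plane $T_Y\Gamma^{y_0}$ then has timelike one-dimensional orthocomplement in $T_Y\Gamma$, spanned by $E$. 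The paper instead first proves \equ{11} and positive definiteness of $\bar g^0$, then reads $g^0_{00}<0$ off the block-diagonal identity $\det g^0 = g^0_{00}\det\bar g^0$ together with $\det g^0<0$ from \equ{detg.negative}. Both arguments are sound and ultimately invoke the timelike character of $\Gamma$; yours is more conceptual and self-contained (it does not logically depend on first establishing the other two claims), while the paper's is more elementary once \equ{detg.negative} is in hand. You correctly flag the equivalence yourself at the end.
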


\begin{proof}

To be noticed is that for $i=1,\ldots, n-1$ and $Y=  Y_l(y)$ we have that $\pp_a Y_l \in T_Y\Gamma^t$ and
$
\pp_{0} Y_l = E( Y) .
$

Since $J\pp_a Y_l= \pp_a Y_l$, then by definition of $E$ we have that
$$
g_{0a}  = \langle\pp_0 Y_l  ,   \pp_i Y_l\rangle  = 0, \quad i=1,\ldots n-1.
$$

Next, let us observe that for $i,j=1,\ldots, n-1$ we have that
$$
\langle \pp_i Y_l , \pp_j Y_l   \rangle =  \pp_i X_l \cdot \pp_j X_l
$$
Besides the $n-1$  vectors $ \xi_i(t) :=  \pp_i X_l(t,y')$ are linearly independent.
This follows from the fact that all vectors  $\xi_i (t)$ are solutions of a linear system of the form
$$
\frac {d \xi_i}{dt}  (t) = A(t)[\xi_i (t)].
$$
They are linearly independent at $t=0$ since they are associated to local coordinates for the manifold $\Gamma^0$, and that property is preserved in time. The matrix $\Xi(t)$ whose columns are $\xi_i(t)$ is therefore non-singular, hence the matrix
$$\bar g^0(y_0,y') = \Xi(y_0)^T\Xi(y_0)$$ is positive definite.

Finally, \eqref{11} and the positive definiteness of $\bar g^0$ implies that
$g_{00}$ and $\det g$ have the same sign, so \eqref{condi} follows from
\eqref{detg.negative}.

 The proof is concluded. \end{proof}

\bigskip
A nice characteristic of the local coordinates built above is that they allow to
express the
$\Box_\Gamma$ operator in a clean way as a second order wave operator.  That leads to a clean proof of Lemma \ref{lema1}

\subsection{Proof of Lemma \ref{lema1} }
We want to solve the equation
\be \label{J2}
\begin{aligned}
  \Box_\Gamma h  +  {\bf a}_\Gamma  h \, =& \, g \inn \Gamma \\
 h \,= \, \pp_t h \,= &\, 0 \onn \Gamma\cap\{t=0\}.
\end{aligned}
\ee
for a given function $g$. In local coordinates around
$\Lambda_l = [0,T] \times V_l $ the equation is expressed as

\be\label{cort}
\frac 1{\sqrt{|\det g ^0(y)|}}    \pp_a( \sqrt{|\det g ^0(y)|}  g^{0,ab}(y) \pp_b h)   +  {\bf a}_\Gamma(y))  h \, = \, q(y) \
\ee
As customary we write $g^{0,ab}(y) $ for the entries of the matrix $(g^0(y)) ^{-1}$.
From the previous lemma, we see that
$$
(g^0(y)) ^{-1}=  \left [ \begin{matrix} (g^0_{00}(y) )^{-1} \   &\ 0  \\  0 \ & \ (\bar g^0(y))^{-1}   \end{matrix} \right ].
$$
and hence, naturally relabeling  $y= (t,y')$, \equ{cort} can be written in the coordinate patch $[0,T]\times V_l$ in the form
\be\label{onda}
\begin{aligned}
-  \pp^2_th  +    a_{ij}(t,y')\pp_{ij} h  +b_0(t,y')\pp_t h + b_i(t,y')\pp_i h   + \bar {\bf a}_\Gamma (t,y') h   =  Q(t,y'), &\\
 (t,y')\in [0,T]\times  V_l, & \\
h(0,y') =   h_t (0,y')= 0,  \quad y'\in V_l.&
\end{aligned}
\ee
for certain coefficients $b_\alpha$, where
$$
\begin{aligned}
a_{ij} (t,y')\,= &  \,   g_{00}(t,y') \,  g^{0,ij}(t,y'), \\   \bar {\bf a}_\Gamma(t,y')\, = &\,g_{00}(t,y')\
 {\bf a}_\Gamma(t,y'), \\   Q(t,y')\, =&\,g_{00} (t,y') \, q (t,y').
\end{aligned}
 $$
The matrix with entries $a_{ij} (t,y)$ is uniformly positive definite.
If we consider a smooth bounded domain  $\bar \Omega \subset V_l$ and restrict equation \equ{onda} to $\Omega$ with zero boundary conditions,
the standard theory for linear wave equations based on energy estimates, as developed in \cite{evans}, Section 7.2 yields existence and regularity with uniform controls
in Sobolev spaces of arbitrary order in terms of corresponding norms of $Q$.
Existence of a solution of the full problem \equ{J2} follows from a standard argument using a partition of unity on $\Gamma$, while uniqueness is a byproduct of energy identities. That solution clearly has uniform controls as stated thanks to Sobolev embeddings. The proof is complete.

\subsection{modified Fermi coordinates}\label{ss.mfc}

In our later arguments, we will need to glue together estimates close to and far from
$\Gamma$. In order to do this, it is convenient to introduce a new coordinate system
in a neighborhood of $\Gamma$.  These will coincide with
Fermi coordinates near $\Gamma$ and farther from $\Gamma$, they
will have the property that the timelike variable coincides with the  $t$ variable of standard $(x,t)$ coordinates.

We will mostly\footnote{except in the proof of
Lemma \ref{L.modferm} in Appendix \ref{AppA}, in which we
need to distinguish carefully between
the different coordinate systems.}
abuse notation somewhat and not distinguish between Fermi coordinates
and modified Fermi coordinates. Thus we will continue to
write the coordinates as $(y,z)$, where $y = (y_0,y')
= (y_0,\ldots, y_{n-1})$. Similarly, we will generally write $g_{\a\b}$
to denote the metric tensor with respect to these coordinates.

To state the main properties of this coordinate system, we first need to
introduce some notation.
Let
\be\label{clp}
Y_l : \Lambda_l\to \Gamma, \quad l=1\ldots, m \qquad\mbox { for }
\Lambda_l = [0,T]\times V_l
\ee
be the canonical local parametrizations fixed in Section \ref{coords}.
With this notation, given $T_1<T$,
the modified Fermi coordinate system will be defined locally via
\[
(x,t) = \Phi_l(y, z), \qquad (y,z)\in [0,T_1]\times V_l\times(-\delta_1,\delta_1)
\]
for some map $\Phi_l: [0,T_1]\times V_l\times (-\delta_1,\delta_1)\to \R^{1+n}$ and some
$\delta_1\le \delta$, both
 constructed in  the proof of Lemma \ref{L.modferm} below.
We will choose these maps to be independent of $l$
in the sense that
\be\label{ind.of.l}
\mbox{ if $Y_l(y) = Y_k(\widetilde y)$ for $y\in \Lambda_l$ and $\widetilde y\in \Lambda_k$, then $\Phi_l(y,z) = \Phi_k(\widetilde y, z)$.}
\ee
This implies that $\{ \Phi_l\}_l$ will induce a well-defined function
\[
\Phi: [0,T_1]\times \Gamma^0\times (-\delta_1,\delta_1)\to \R^{1+n}
\]
defined by setting $\Phi(y_0, X^0_l(y'), z) := \Phi_l(y_0, y', z)$.
We will abuse notation somewhat and write $\Phi$ to mean either this
function or else its representative $\Phi_l$ with respect to a
generic local parametrization $X^0_l:V_l\to\Gamma^0$, depending on the context.

We will use the notation $ \left[ \begin{array}{r}  g_{\a\b}
\end{array}
\right]_{\alpha, \beta = 0}^n$  for
components of the metric tensor in local
coordinates:
\[
g_{\alpha\beta}(y,z):=   \metric{\frac{\pp \Phi_l} {\pp y_\alpha}, \frac {\pp\Phi_l}{\pp y_\b}}, \qquad \qquad \alpha,\beta = 0,\ldots, n, \mbox{ with }\frac{\pp}{\pp y_n} :=\frac{\pp}{\pp z} .
\]
Similarly, $ \left[ \begin{array}{r}  g^{\a\b}
\end{array}
\right]_{\alpha, \beta = 0}^n$ denotes the inverse metric tensor.

\begin{lemma}\label{L.modferm}
There exists coordinates as described above
and numbers $r_2<r_1 \le \delta_1/2$
with the following
properties.

First, $\Phi(y,z) = Y(y)+z\nu(y)$ for $|z| < r_2$,
where $Y$ is the canonical  parametrization of $\Gamma$ from \eqref{canonical.coords},
and thus
\begin{equation}\label{ggl.gab}
\left. \begin{aligned}
\gf_{0i} = \gf^{0i} &=O(|z|) \ \mbox{ for }i=1,\ldots, n\\
\gf_{an}= \gf^{an} &=0 \ \mbox{ for }i=0,\ldots, n-1\\
\gf_{nn} = \gf^{nn}&=1\\
\end{aligned}
\right\} \qquad\mbox{ when $|z|<r_2$}
\end{equation}
and
\begin{equation}
\left. \frac {\pp}{\pp z}\, \det (g(y,z)) \right|_{z=0} = 0 \ . 
\label{dz.gnn}\end{equation}
Second,
\begin{equation}\label{y0=tbis}
y_0 = t \quad\mbox{ when }(x,t) = \Phi(y,z), \mbox{ \ \  if \ \  }
\begin{cases}|z|\ge r_1 &\mbox{ or }
\\y_0 = 0.&
\end{cases}
\end{equation}
Finally,
\begin{equation}\label{gab.signs}
\left. \begin{aligned}
\gf_{00}, \ \gf^{00}&<0\\
 \left[ \begin{array}{r}  \gf_{ij}
\end{array}
\right]_{i,j=1}^{n},
 \left[ \begin{array}{r}  \gf^{ij}
\end{array}
\right]_{i,j=1}^{n} &\mbox{ are positive definite}
\end{aligned}
\right\}
\end{equation}
everywhere in $[0,T_1]\times V_l\times(-\delta_1,\delta_1)$ for all $l$.

\end{lemma}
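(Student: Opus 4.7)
The plan is to construct $\Phi_l$ by perturbing the standard Fermi coordinate map with a $z$-cutoff that suppresses the time-component of the normal shift once $|z|$ exceeds a threshold. Fix an even cutoff $\chi\in C^\infty(\R;[0,1])$ with $\chi\equiv 1$ on $[-r_2,r_2]$, $\chi\equiv 0$ outside $(-r_1,r_1)$, and $\chi$ monotone on $[0,\infty)$, where $0<r_2<r_1\le\delta_1/2$ will be chosen small. For each canonical parametrization $Y_l$, writing $\nu_l:=\nu\circ Y_l=(\nu_l^t,\nu_l^x)$ and $e_0=(1,0,\dots,0)$, define
\[
\Phi_l(y,z) \ := \ Y_l(y) \,+\, z\,\nu_l(y) \,-\, (1-\chi(z))\,z\,\nu_l^t(y)\,e_0.
\]

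The three piecewise conditions then follow by direct inspection. For $|z|\le r_2$ we have $\chi(z)=1$ and $\Phi_l$ reduces to the standard Fermi map, so \eqref{basic.fermi} gives $g_{an}=0$ and $g_{nn}=1$; combined with Lemma \ref{lema2} which provides $g_{0a}^0=0$ for $a=1,\dots,n-1$ at $z=0$, a Taylor expansion yields \eqref{ggl.gab}, while \eqref{dz.gnn} is a direct restatement of $H_\Gamma\equiv 0$ via \eqref{mean}. For $|z|\ge r_1$ we have $\chi(z)=0$ and the $t$-component of $\Phi_l$ collapses to $y_0+z\nu_l^t-z\nu_l^t=y_0$, giving $y_0=t$. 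At $y_0=0$, the canonical form $Y_l(y_0,y')=(y_0,X_l(y_0,y'))$ yields $Y_l^t(0,y')=0$, while the zero-initial-velocity assumption forces $e_0\in T_{Y_l(0,y')}\Gamma$ and therefore $\langle\nu_l,e_0\rangle_m=-\nu_l^t=0$ on $\{y_0=0\}$, so the $t$-component of $\Phi_l(0,y',z)$ vanishes identically in $z$. The compatibility \eqref{ind.of.l} is automatic because $\nu$ is a globally defined vector field on $\Gamma$ and the defining formula for $\Phi_l$ is expressed purely in terms of $Y_l$, $\nu_l$, and the $l$-independent cutoff $\chi$.

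The main obstacle, and the step requiring the most care, is showing that $\Phi_l$ is a diffeomorphism on $[0,T_1]\times V_l\times(-\delta_1,\delta_1)$ with the signature \eqref{gab.signs} throughout. Computing
\[
\pp_z\Phi_l = \nu_l - \lambda(z)\,\nu_l^t\,e_0, \qquad \lambda(z) := 1-\chi(z)-z\chi'(z),
\]
the choice of an even, monotone cutoff ensures $\lambda(z)\ge 0$ everywhere, and the time-like character of $\Gamma$ (which, via $g^0_{00}<0$ in Lemma \ref{lema2}, gives $|E^x|^2<1$ for the canonical tangent $E=\pp_0 Y_l=(1,E^x)$) prevents the Jacobian from degenerating for any $\lambda\ge 0$; the $\pp_a$ columns contribute only $O(|z|)$ corrections to their $z=0$ values, so compactness of $\Gamma$ provides a uniform lower bound on $|\det J_{\Phi_l}|$ once $\delta_1$ is small enough. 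Injectivity is then verified in two stages: the $\pp_{y_0}$-derivative of the $t$-component of $\Phi_l$ is $1+O(\delta_1)$, so $t$ determines $y_0$ uniquely; with $y_0$ fixed, the residual spatial map is the standard Fermi map for $|z|\le r_2$, the Euclidean Fermi map for $\Gamma^{y_0}$ when $|z|\ge r_1$ (noting that $\nu_l^x$ is Euclidean-normal to $\Gamma^{y_0}$ by $\langle\nu_l,\pp_a Y_l\rangle_m=0$ for $a\ge 1$ and nonzero since $|\nu_l^x|^2=1+|\nu_l^t|^2\ge 1$), and smoothly interpolates in between. Finally the signs \eqref{gab.signs} hold at $z=0$ by Lemma \ref{lema2} and extend to $(-\delta_1,\delta_1)$ by continuity, after one last shrinking of $\delta_1$.
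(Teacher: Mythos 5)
Your construction is genuinely different from the paper's and is in some ways cleaner: rather than solving an implicit equation $t(\eta_0(\py,\pz),\py',\pz)=\py_0$ and interpolating the reparametrization $y_0(\py,\pz)=\chi_0\py_0 + (1-\chi_0)\eta_0$, you directly subtract a cut-off copy of the time-component of the normal shift, $\Phi_l = Y_l + z\nu_l - (1-\chi(z))z\,\nu_l^t e_0$. Your verifications of \eqref{y0=tbis}, \eqref{ggl.gab}, \eqref{dz.gnn}, and of \eqref{ind.of.l} are all correct, and your observation that $\nu_l^t\equiv 0$ on $\{y_0=0\}$ (via zero initial velocity) handles the $y_0=0$ alternative neatly. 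Your construction also yields the pleasant closed form
\[
\tg_{nn} = \metric{\nu_l - \lambda\nu_l^t e_0, \nu_l - \lambda\nu_l^t e_0}
= 1 + (\nu_l^t)^2\,\lambda(2-\lambda), \qquad \lambda(z) = 1-\chi(z)-z\chi'(z) = 1-\pp_z(z\chi),
\]
which has no analogue in the paper's argument.

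However, your proof of \eqref{gab.signs} has a genuine gap, and it is exactly the delicate point that the paper devotes most of its proof to. You claim that the time-like character of $\Gamma$ ``prevents the Jacobian from degenerating for any $\lambda\ge 0$,'' and then conclude that the signs \eqref{gab.signs} ``extend by continuity after one last shrinking of $\delta_1$.'' The first claim is true of the Jacobian \emph{determinant} (one can check $\det D\Phi_l$ scales like $1+\lambda(\nu_l^t)^2$, which is bounded away from zero), but it does not give you positive-definiteness of $[\tg_{ij}]_{i,j=1}^n$: the signature of the full metric is protected by $\det\tg\ne 0$, but a block can lose positivity without the determinant vanishing. Concretely, the formula above shows $\tg_{nn}<0$ as soon as $(\nu_l^t)^2\lambda(\lambda-2)>1$, which happens for a generic monotone cutoff $\chi$ (where $\lambda$ readily exceeds $2$ somewhere in the transition region; for example $r_2=r_1/2$ and $\chi$ linear gives $\lambda$ ranging up to $3$) at times $t$ where $|\nu^t|$ is of order one. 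The ``shrink $\delta_1$ and use continuity'' argument does not help, because $\lambda(z)$ depends only on the shape of $\chi$, not on the scale: rescaling $\chi(z)=\psi(z/r_1)$ leaves $\lambda$ unchanged, so $\lambda$ makes an $O(1)$ excursion on $[r_2,r_1]$ no matter how small $\delta_1$ is.

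The fix is to constrain the cutoff, as the paper does with its explicit choice $\chi_0(\pz)=r_1(r_1/\pz-1)$, $r_2 = r_1^2/(1+r_1)$. In your formulation the constraint is transparent: choose $r_2\le r_1/2$ and a cutoff with $|\pp_z(z\chi)|\le 1$ (e.g.\ take $z\chi(z)$ to interpolate linearly from $r_2$ at $z=r_2$ to $0$ at $z=r_1$ and regularize). Then $\lambda\in[0,2]$ everywhere, so $\tg_{nn} = 1+(\nu_l^t)^2\lambda(2-\lambda)\ge 1$, and the remaining entries $\tg_{ij}$ ($1\le i,j\le n-1$) and $\tg_{in}$ are $O(|z|)$ perturbations of their Fermi values, after which Schur-complement positivity of the full block follows for $\delta_1$ small. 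With that one adjustment your construction goes through and is arguably simpler than the paper's. As stated, though, your conditions on $\chi$ (even, monotone, with unspecified $r_2<r_1$) do not guarantee $\tg_{nn}>0$, so \eqref{gab.signs} is not established.
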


We defer the proof to Appendix \ref{AppA}.
Conclusions \eqref{ggl.gab} and \eqref{dz.gnn}
will be immediate from our construction and from properties of Fermi coordinates noted above.
Properties  \eqref{y0=tbis}
will be useful when we patch together energy estimates near and far from  $\Gamma$. Condition \eqref{gab.signs} is the point in the proof that requires the most attention. It is needed to
guarantee coercivity of energy estimates computed with respect to this coordinate
system.

\medskip

\section{Linear theory}\label{sec.linear}

We are interested in linear estimates associated to the operator
\be\label{Lve}
L_\ve [\vp] :=  \Box \vp +  \frac 1 {\ve^2}f'(u_\ve^*) \vp
\ee
obtained
by linearizing \eqref{1} around the global approximate solution $u_\ve^*$
constructed in
Sections \ref{ss:approximation} and \ref{ss:global}. We first introduce some notation.

Recall that the construction of modified Fermi coordinates introduced
induces a map $\Phi:[0,T_1]\times \Gamma^0\times(-\delta_1,\delta_1)\to \R^{1+n}$.
For
$s\in [0,T_1]$ we will write
\[
\Sigma_s^{nr}  := \{ \Phi(s, y', z) : y'\in \Gamma^0, |z|<\delta_1 \}, \\
\]
Our standing assumptions imply that $\Gamma_s = \{ \Phi(s, y', 0) : y'\in \Gamma^0\}$
divides $\{s\}\times \R^n$ into two disjoint open components, say $\OO^+_s$ and $\OO^-_s$, with $\OO^-_s$ being bounded.
The same thus holds for
$\Gamma_{s,z} := \{ \Phi(s, y', z) : y'\in \Gamma^0\}$ whenever $r_1\le |z| <\delta_1$,
since then \eqref{y0=tbis} imples that  $\Gamma_{s,z}$ is a subset of $\{s\}\times \R^n$
that retracts onto $\Gamma_s$.
For $s \in [0,T_1]$ we define
\begin{align*}
\Sigma^-_s
&:= \mbox{ the bounded component of }(\{ s\}\times \R^n)\setminus \Gamma_{s, -r_1}\\
\Sigma^+_s
&:= \mbox{ the unbounded component of }(\{ s\}\times \R^n)\setminus \Gamma_{s, r_1}\\
\Sigma^{far}_s
&:= \ \Sigma^+_s \cup \Sigma^-_s\\
\Sigma_s
&:= \ \Sigma^{nr}_s \cup \Sigma^{far}_s\\
\Sigma
&:= \cup_{s\in [0,T_1]} \Sigma_s\, .
\end{align*}
For $0<\rho \le \delta_1$ we will also  use the notation
\begin{equation}
\NN_\rho  = \{ \Phi(y, y',z):  (y_0,y', z)\in   [0,T_1]\times\Gamma^0 \times (-\rho, \rho) \} .
\label{Nrho.def}\end{equation}

Next, we specify that the cutoff function $\chi_0$ in the definition of $u_\ve^*$
satisfies
\be\label{chi0}
\chi_0=1 \mbox{ in } \NN_{r_2/4}, \qquad
\chi_0 = 0 \mbox{ in }\Sigma\setminus \NN_{r_2/2}.
\ee
The exponential decay of the local approximate solution $u_\ve$ away from $\Gamma$
implies that for every $l$, $\sup_{\NN_{\delta_1}}|D^l (u_\ve - u_\ve^*)| \le C e^{-c/\ve}$ for suitable constants
$C,c$ (depending on $l$). Hence in $\NN_{\delta_1}$, writing $u_\ve^*$ as a function of modified Fermi coordinates
$(y,z)$,
\be
u_\ve^*(y,z) = w_\ve(y,z) +\phi(y,z), \qquad\quad w_\ve(y,z) = w\left (\frac z\ve - h(y)\right )
\label{ustar.recall}\ee
where $w$ is the heteroclinic \eqref{homo} and $h,\phi$ satisfy \eqref{pico1}.
We will prove

\begin{prop}\label{prop.linear}
Given a smooth function $\eta\in L^2({\Sigma})$  and smooth
data $(\vp_0,\vp_1) \in H^1\times L^2(\R^n)$,
there exists a smooth
solution $\vp: \Sigma \to \R$
to the initial value
problem
\begin{equation}
L_\ve [\vp] = \eta \ \ \mbox{ in } \Sigma,
\qquad\qquad
(\vp, \pp_t\vp)\Big|_{t=0} = (\vp_0,\vp_1)  .
\label{linear.eqn}\end{equation}
In addition, there exist $C, \ve_0>0$, depending only on $\Gamma$ and $T_1$ and $\delta_1$,
such that
and for every $s\in [0,T_1]$ and $\ve\in (0,\ve_0)$, we have the estimate
\be
\begin{aligned}\label{crude.linear}
&\int_{\Sigma_s} \ve^2\left(|\nabla_x\vp|^2 +(\partial_t\vp)^2]\right) + \vp^2
\\
&\qquad\qquad\qquad \le
C \int_0^s\Big( \int_{\Sigma_\sigma} \eta^2 \Big) d\sigma +
C\int_{\R^n}\left[ |\nabla_x \vp_0|^2 +  |\vp_1|^2+ \frac 1{\ve^2}\vp_0^2 \right] dx .
\end{aligned}
\ee
\end{prop}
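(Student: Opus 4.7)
For each fixed $\ve>0$, the coefficient $\ve^{-2}f'(u_\ve^*)$ is smooth and bounded, so standard linear wave theory (Evans, Ch.\ 7) produces a unique smooth solution $\vp$ of \eqref{linear.eqn}; the content of the proposition is the uniform bound \eqref{crude.linear}. The naive multiplier $\pp_t\vp$ fails: the resulting error $\int\pp_t f'(u_\ve^*)\,\vp^2$ is of order $O(\ve^{-1})\int\vp^2$ and Gr\"onwall does not close uniformly in $\ve$. Instead I use the multiplier $\pp_{y_0}\vp$ associated to the modified Fermi coordinates of Lemma \ref{L.modferm}, whose key feature is that $y_0$ coincides with $t$ on $\{t=0\}$ and outside $\NN_{r_1}$. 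Writing $u_\ve^*=w_\ve+O(\ve^2)$ with $w_\ve:=w(z/\ve-h(y))$ and $h=O(\ve)$ in $C^\infty$ (from Proposition \ref{prop.as}), and using that $\chi_0$ depends only on $z$,
\[
\pp_{y_0}u_\ve^* \,=\, -w'(z/\ve-h)\pp_{y_0}h + O(\ve^2) \,=\, O(\ve)\quad\mbox{uniformly},
\]
so $|\pp_{y_0}f'(u_\ve^*)|\le C\ve$.

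\textbf{Energy identity and coercivity.}
Multiplying $L_\ve[\vp]=\eta$ by $2\ve^2\pp_{y_0}\vp\sqrt{|\det g|}$ and integrating over the slab $\{0\le y_0\le s\}$, using \eqref{Box.general} and the sign structure \eqref{gab.signs} of $g^{\a\b}$, yields
\[
\mathcal E(s)-\mathcal E(0)\, = \,-\!\int_0^s\!\!\int\pp_{y_0}f'(u_\ve^*)\,\vp^2 -2\ve^2\!\int_0^s\!\!\int\eta\,\pp_{y_0}\vp + (\mbox{controlled geom.\ terms}),
\]
with $\mathcal E(s):=\int\bigl[-g^{00}\ve^2(\pp_{y_0}\vp)^2+\ve^2g^{ij}\pp_i\vp\pp_j\vp-f'(u_\ve^*)\vp^2\bigr]\sqrt{|\det g|}\,dydz$. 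By Lemma \ref{L.modferm} the first two terms dominate $\int\ve^2[(\pp_{y_0}\vp)^2+|\nabla_x\vp|^2]$ with $\ve$-uniform constants. To render the indefinite $-f'(u_\ve^*)\vp^2$ coercive, decompose $\vp=a(y,y_0)w'_\ve+\vp^\perp$ with $w'_\ve:=w'(z/\ve-h)$ and $\int\vp^\perp w'_\ve\,dz=0$ slicewise. Applying \eqref{quadratic} in the rescaled variable $\zeta=z/\ve-h$ gives
\[
\int\bigl[\ve^2(\pp_z\vp^\perp)^2-f'(w_\ve)(\vp^\perp)^2\bigr]\,dz \,\ge\, c\int\bigl[(\vp^\perp)^2+\ve^2(\pp_z\vp^\perp)^2\bigr]\,dz,
\]
and outside $\NN$, $-f'(u_\ve^*)\ge c>0$ (since $u_\ve^*\to\pm 1$ and $f'(\pm1)=-a$). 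Hence
$\mathcal E(s)\gtrsim \int\ve^2[(\pp_{y_0}\vp)^2+|\nabla_x\vp|^2]+\int(\vp^\perp)^2+\int_{\mathrm{far}}\vp^2$,
controlling $\int\vp^2$ except for the parallel contribution $\int_\NN\vp_\parallel^2\asymp \ve\int_\Gamma a^2$.

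\textbf{Parallel mode on $\Gamma$.}
To control $\int_\Gamma a^2$, project $L_\ve[\vp]=\eta$ onto $w'_\ve$: multiplying by $w'_\ve$ and integrating in $z$, the leading singular terms from $\pp_z^2+\ve^{-2}f'(u_\ve^*)$ cancel thanks to $\pp_\zeta^2 w'+f'(w)w'=0$. Using the expansion \eqref{exp} and the minimality $H_\Gamma=0$, what remains is an effective wave equation
\[
J_\Gamma[a] + O(\ve)a \,=\, G(y,y_0) \quad\mbox{on }\Gamma,
\]
where $J_\Gamma$ is the Jacobi--Minkowski operator of \eqref{J1} and $G$ depends linearly on the $w'_\ve$-projection of $\eta$ and on bounded functionals of $\vp^\perp$. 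The $\Gamma$-energy estimate from the proof of Lemma \ref{lema1} (under the canonical coordinates of Lemma \ref{lema2}, which render $J_\Gamma$ a genuine hyperbolic operator) bounds $\int_{\Gamma_s}(a^2+|\pp_{y_0}a|^2+|\nabla_{\Gamma_s}a|^2)$ by $\int_0^s\!\int_\Gamma G^2$ plus initial data, and a careful accounting of scaling factors gives $\ve\int_{\Gamma_s}a^2 \le C\bigl(\int_0^s\!\int\eta^2 + \int_0^s\mathcal E(\sigma)\,d\sigma + \mbox{init.\ data}\bigr)$.

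\textbf{Grönwall and conclusion.}
With the coercivity and parallel-mode estimates in hand, the error terms satisfy $|\int\pp_{y_0}f'(u_\ve^*)\vp^2|\le C\ve\int\vp^2\le C(\mathcal E + \ve\int_\Gamma a^2)$ and $|2\ve^2\int\eta\pp_{y_0}\vp|\le\int\eta^2+\ve^2\mathcal E$, all with $\ve$-uniform constants. Grönwall's inequality applied to $\mathcal E(s)+\ve\int_{\Gamma_s} a^2$ closes the estimate, and $\mathcal E(0)\le C\int[\vp_0^2+\ve^2\vp_1^2+\ve^2|\nabla\vp_0|^2]$ is dominated by the RHS of \eqref{crude.linear} (the factor $\ve^{-2}$ on $\vp_0^2$ being generous, and in fact reflecting the natural $\ve$-weighting of the parallel-mode initial data). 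Estimates on $\{y_0=s\}$ transfer to $\Sigma_s=\{t=s\}$ using $y_0=t$ outside $\NN_{r_1}$ and a direct pointwise comparison in the near region via Lemma \ref{L.modferm}. The principal obstacle is paragraph 3: deriving the effective equation for $a$ with a forcing $G$ that obeys the right bounds requires careful handling of Fermi-coordinate commutators, the cross-coupling between $a$ and $\vp^\perp$, and the correct scaling of the projection.
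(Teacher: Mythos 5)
Your proposal matches the paper's proof in its main architecture: the multiplier $\pp_{y_0}\vp$ in the modified Fermi coordinates of Lemma~\ref{L.modferm}, the sign structure \eqref{gab.signs} giving coercivity, the decomposition of $\vp$ near $\Gamma$ into a multiple of $\pp_z w_\ve$ plus an orthogonal piece $\vp^\perp$, the spectral gap \eqref{quadratic} for the perpendicular part, the bound $|\pp_{y_0}f'(u_\ve^*)|\lesssim\ve$ (the paper's \eqref{pp0f}), and the Gr\"onwall closure. These features are all present in the paper's proof, so the overall scheme is sound.

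Where you depart genuinely from the paper is in the treatment of the parallel mode $a$ (the paper's $\gamma/\ve$), and this is also where you flag the ``principal obstacle.'' You propose to project $L_\ve[\vp]=\eta$ onto $w'_\ve$, derive an effective Jacobi--Minkowski wave equation $J_\Gamma[a]+O(\ve)a=G$ on $\Gamma$, and then invoke the hyperbolic energy estimate of Lemma~\ref{lema1}. The paper does something much lighter: it augments the energy by the term $\frac{C}{\ve}\int_{\Gamma^0}\gamma^2\,\omegazero\,dy'$ (see \eqref{Es.def}), proves via the lower bound \eqref{Et.est} that $E(s)$ already controls $\frac1\ve\int(\gamma^2+|D_y\gamma|^2)$ --- the control on $\pp_0\gamma$ comes from the $I_{1,1}$ piece of the near-field energy --- and then simply observes that $\frac{d}{ds}\frac1\ve\int\gamma^2 \le \frac1\ve\int(\gamma^2+(\pp_0\gamma)^2)\le CE(s)$, so the parallel mode is absorbed into the same Gr\"onwall loop with no extra PDE to solve. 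This sidesteps entirely the Fermi-coordinate commutators, the cross-coupling of $G$ to $\vp^\perp$, and the scaling bookkeeping you correctly identify as delicate; nowhere does the paper need an equation for $\gamma$, only that $\pp_0\gamma$ sits inside a quantity already under control. So while your projection route is in the spirit of classical modulation arguments and could probably be pushed through, the paper's ``add it to the energy'' trick is substantially simpler and is the key observation that makes the whole estimate close cleanly. One smaller point: the paper actually performs the decomposition on the cut-off function $\bar\vp=\chi_1\vp$ (so that the $z$-integrals are over all of $\R$ and the spectral gap applies directly, with the cut-off errors absorbed into $I_{1,3}$); if you decompose $\vp$ itself on a finite $z$-interval you would have to track boundary errors, which is harmless but worth being explicit about.
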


In \eqref{crude.linear},
the integrals over $\Sigma_s$ are with respect to the induced Euclidean $n$-dimensional volume. (In fact we will employ a variety of different $n$-forms in our arguments, but all of them are uniformly comparable to the Euclidean $n$-volume.)

The  point of the proposition is the estimate; existence of a solution is standard
and we will not discuss it.

Note that even when $\eta = 0$, the estimate as stated allows the terms on the left-hand side to be larger by a factor of $\ve^{-2}$ than the corresponding terms on the right-hand side.
In fact our proof yields a sharper estimate, see Remark \ref{linear.sharper}.
However, \eqref{crude.linear} is sufficient for our later purposes.

\medskip

\begin{proof}[Proof of Proposition \ref{prop.linear}]

{\bf 1}.
Our overall aim is to construct some quantity $E(s)$ that controls
the left-hand side
of \eqref{crude.linear}, and that satisfies a differential
inequality allowing for the application of Gr\"onwall's inequality.
This quantity will be constructed by integrating an energy density
over $\Sigma_s$ with respect to well-chosen $n$-forms. We will
treat $\Sigma^{nr}_s$ and $\Sigma^{far}_s$ separately.

We start by describing the $n$-form we will employ on $\Sigma^{nr}_s$.
First, fix a volume form on
$\Gamma^0$, which we will write in local coordinates as
$\omegazero(y')dy'$.
Let  $\chi^{nr}:\R\to [0,1]$ be a smooth function
such that
\[
-C(r_1) \le \partial_z \chi^{nr} \le 0,
\qquad \chi^{nr}(z)=1 \mbox{ for }z\le  r_1,
\qquad \chi^{nr}(z)=0 \mbox{ for }z\ge 2r_1
\]
for $r_1$ defined in \eqref{y0=tbis}.
Then we define $\omega^{nr}_s$ to be the $n$-form on $\Sigma^{nr}_s$ written in
local coordinates as
\[
\omega^{nr}_s = \omegazero(y')\chi^{nr}(z) dy'\, dz .
\]
Thus
for any $s\in [0,T_1]$ and  function $f = f(s,y',z)$ on $\Sigma_s^{nr}\cong \{s\}\times \Gamma^0\times (-\delta_1,\delta_1)$,
\[
\int_{\Sigma^{nr}_s} f \omega^{nr}_s
=
\int_{-\delta_1}^{\delta_1} \int_{y'\in \Gamma^0} f(s,y',z) \, \omegazero(y') dy' \, \chi^{nr}(z)\,dz.
\]

We next define $\omega^{far}_s$ as the $n$-form on $\Sigma^{far}_s$ written in $(x,t)$ coordinates on
as
\[
\omega^{far}_s := \chi^{far}(s,x) \, dx
\]
where $\chi^{far}\in C^\infty(\OO)$ satisfies
\[
\chi^{far} = 1\mbox{ outside of }\NN_{\delta_1}
\]
and in $\NN_{\delta_1}$, writing $\chi^{far}$ as  a function of modified Fermi coordinates $(y,z)$,
\[
\chi^{far}(y,z) = 1 - \chi^{nr}(z).
\]
Finally we will define $\omega_s := \omega^{far}_s+\omega^{nr}_s$, an $n$-form on $\Sigma_s$
that is uniformly comparable to the induced Euclidean $n$-volume.

{\bf 2}.
We now derive an energy identity in modified Fermi coordinates near $\Gamma$.
In doing so, we will write the solution $\vp$  of \eqref{linear.eqn} as a function of modified Fermi coordinates $(y,z)$,
and we will identify $\frac{\pp}{\pp y_n}$ with $\frac \pp {\pp z}$.
In these coordinates,
we find\footnote{In the discussion that contains \eqref{Box.general}, we were interested in Fermi
coordinates, but \eqref{Box.general} is completely general, and the particular choice of coordinates was used only later. In any case this is standard.} from \eqref{Box.general} that
\eqref{linear.eqn}  has the form
\[
\frac {1}{\sqrt{|\det g | }}\frac \pp {\pp y_\a}\left(\sqrt{|\det g|} g^{\a\b}\frac {\pp \vp} {\pp y_\b }
\right)
+  \frac 1 {\ve^2} f'(u_\ve^*)\vp = \eta .
\]
We rewrite this as
\[
\frac 1{\omegazero} \frac \pp {\pp y_\a}\left(\omegazero g^{\a\b} \frac{\pp \vp}{\pp y_{\b}} \right)
+b^\b  \frac{\pp \vp}{\pp y_{\b}}
+  \frac 1{\ve^{2}}f'(u_\ve^*)\vp = \eta,
\]
for
\[
b^\b :=\frac \omegazero { \sqrt {|\det g|}}g^{\a\b}\frac \pp {\pp y_\a} \left(\frac{ \sqrt{|\det g|}}\omegazero  \right).
\]
We multiply this equation by $ \pp_{y_0}\vp$ and rewrite.
This gives rise to a number of terms.
An easy term is
$$
\ve^{-2} f'(u_\ve^*) \vp \frac{\pp \vp}{\pp y_0} = \ve^{-2}\frac {\pp }{\pp y_0}\left (  \frac 12  f'(u_\ve^*) \vp^2 \right ) -
\ve^{-2}  \frac {\vp^2}2 \frac {\pp }{\pp y_0}  ( f'(u_\ve^*)) ,
$$
The error term
$ { b}^\beta \pp_\beta \vp \, \pp_0\vp$ we keep as it is.
The leading term is rewritten as follows:
\begin{align*}
\frac 1 \omegazero
\frac \pp {\pp y_\a}\left( \omegazero g^{\a\b}  \frac {\pp \vp }{\pp y_\b}
\right)\frac {\pp\vp }{\pp y_0}
&=
\frac 1 \omegazero\frac \pp {\pp y_\a}\left(\omegazero  g^{\a\b}
\frac {\pp \vp}{\pp y_\b}  \frac {\pp\vp }{\pp y_0}
\right)
-  g^{\a\b} \frac {\pp \vp}{\pp y_\b}  \ \frac {\pp^2\vp} {\pp y_\a \pp y_0}
\\
&=
\frac 1 \omegazero
\frac \pp {\pp y_\a}\left(\omegazero  g^{\a\b} \frac {\pp \vp}{\pp y_\b}
\frac {\pp\vp }{\pp y_0}
\right)
- \frac1 2 \frac {\pp }{\pp y_0}(  g^{\a\b}
\frac {\pp \vp}{\pp y_\b}  \frac {\pp\vp }{\pp y_\a}
)
\\&
\qquad\qquad\qquad
+ \frac1 2 \frac{ \pp g^{\a\b}} {\pp y_0}
\frac {\pp \vp}{\pp y_\b}  \frac {\pp\vp }{\pp y_0}.
\end{align*}

We substitute these computations into the equation,
write  $ \frac \pp {\pp y_\a} (\cdots )$ as $ \frac \pp {\pp y_0} (\cdots ) +  \frac \pp {\pp y_i} (\cdots )$,
where $i$ runs from $1$ to $n$,
and rearrange to obtain
\begin{align}
& \frac {\pp }{\pp y_0} \left( -g^{0\b}
 \frac {\pp \vp}{\pp y_\b}  \frac {\pp\vp }{\pp y_0}
 + \frac 12 g^{\a\b}
 \frac {\pp \vp}{\pp y_\a}  \frac {\pp\vp }{\pp y_\b}  -   \frac 1{2\ve^2} f'(v) \vp^2 \right)
\label{E0}
\\&\qquad\qquad =
\frac 1 \omegazero \frac {\pp }{\pp y_i}
 \left(    \omegazero g^{i\b}
\frac {\pp \vp}{\pp y_\b}  \frac {\pp\vp }{\pp y_0}
\right )
+ \frac 1 2 (\frac{\pp}{\pp y_0}g^{\a\b})
\frac {\pp \vp}{\pp y_\a}  \frac {\pp\vp }{\pp y_\b}
\nonumber
\\&\qquad \qquad\qquad\qquad
-
 \frac {\vp^2} {2\ve^2}
\frac{\pp}{\pp y_0} (f'(u_\ve^*))
+
 {b}^\b  \frac {\pp \vp}{\pp y_\b}  \frac {\pp\vp }{\pp y_0}
-
\eta  \frac {\pp\vp }{\pp y_0} .
\nonumber
\end{align}

We introduce the tensor $(a^{\a\b})$, defined by
\begin{equation}
a^{00} = - g^{00}, \qquad\qquad
a^{0i} = a^{i0} = 0
\qquad\qquad
a^{ij} =  g^{ij}
\label{aab0}\end{equation}
for $i,j=1,\ldots, n$. It is then easy to check
that $(a^{\a\b})$ is positive definite, and that
\begin{equation}
-g^{\a\b} \xi_0 \xi_\b + \frac 12 g^{\a\b}\xi_\a \xi_\b =
\frac 12 a^{\a\b}\xi_\a \xi_\b
\qquad \qquad\mbox{ for all }\xi.
\label{aab}\end{equation}
Note also that the quadratic form
$a^{ab} \pp_a \vp\,\pp_b \vp$ does not
depend
on the choice of local coordinates on $\Gamma^0$.
We will write
\begin{equation}\label{enr.def}
e_\ve^{nr}(\vp) =
 \left (  \frac 12 a^{\alpha\beta}
\frac {\pp \vp}{\pp y_\a}  \frac {\pp\vp }{\pp y_\b}
 - f'(u_\ve^*) \frac{\vp^2}{2\ve^2}  \right ).
\end{equation}
With this notation, \eqref{E0} becomes
\be
\label{E0bis}
\begin{aligned}
 \frac {\pp }{\pp y_0}\Big(  \ e_\ve^{nr}(\vp) \Big)
&= \frac 1 \omegazero \frac {\pp }{\pp y_i}
 \left(    \omegazero g^{i\b}
\frac {\pp \vp}{\pp y_\b}  \frac {\pp\vp }{\pp y_0}
\right )
+ \frac 1 2 (\frac{\pp}{\pp y_0}g^{\a\b})
\frac {\pp \vp}{\pp y_\a}  \frac {\pp\vp }{\pp y_\b}
\\&\qquad \qquad\qquad
-
 \frac {\vp^2} {2\ve^2}
\frac{\pp}{\pp y_0} (f'(u_\ve^*))
+
 {b}^\b  \frac {\pp \vp}{\pp y_\b}  \frac {\pp\vp }{\pp y_0}
-
\eta  \frac {\pp\vp }{\pp y_0} .
\end{aligned}
\ee

We will also write
\[
E^{nr}_\ve(s;\vp) =
E^{nr}_\ve(s) =
\int_{\Sigma^{nr}_s}e_\ve^{nr}(\vp) \ \omega^{nr}_s  =
\int_{\Sigma^{nr}_s}e_\ve^{nr}(\vp)  \omegazero(y') \chi^{nr}(z) dy'\, dz
\]

{\bf 3}. We will next integrate \eqref{E0bis} with respect
to the $n$-form $\omega^{nr}_s$ over $\Sigma^{nr}_s$.
First note that since $\omegazero$ and $\chi^{nr}$ are independent of $y_0$,
\begin{align*}
\int_{\Sigma^{nr}_s}
 \frac {\pp }{\pp y_0}\Big(  \ e_\ve^{nr}(\vp) \Big)\omega^{nr}_s
&=
\int_{-\delta_1}^{\delta_1}\int_{\Gamma^0} \frac {\pp }{\pp y_0}\Big(  \ e_\ve^{nr}(\vp) \Big)
\chi^{nr}(z)\omegazero(y') \, dy' \, dz\Big|_{y_0=s}
\\
&=
\frac d{ds}E^{nr}_\ve(s) .
\end{align*}

Next, for every $z\in (-\delta_1,\delta_1)$, let $X(\cdot; z)$ denote the vector
field on $\Gamma^0$ whose $i$th component in local coordinates on $\Gamma^0$ is given by $X^i(y') = g^{i\beta}\frac{\pp\vp}{\pp y_\beta}\frac{\pp \vp}{\pp y_0}(s, y', z)$.
For every fixed $z$, the divergence of $X(\cdot, z)$ on $\Gamma^0$ with respect to the $n-1$- form
$\omegazero(y')dy'$ is
\[
\operatorname{div}_{\omegazero} X = \frac 1 \omegazero \sum_{i=1}^{n-1} \frac{\pp}{\pp y_i}( \omegazero X^i)
\]
Since $\Gamma_0$ is a compact manifold without boundary,
$\int_{\Gamma_0}( \operatorname{div}_{\omegazero}X)\  \omegazero(y')dy' = 0$.
Thus
\begin{align*}
\int_{\Sigma^{nr}_s}
\frac 1 \omegazero \frac {\pp }{\pp y_i}
\left(    \omegazero g^{i\b}
\frac {\pp \vp}{\pp y_\b}  \frac {\pp\vp }{\pp y_0}
\right ) \omega^{nr}_s
&
= \int_{-\delta_1}^{\delta_1}
\int_{\Gamma^0}
\frac {\pp }{\pp z}
\left(    \omegazero g^{n\b}
\frac {\pp \vp}{\pp y_\b}  \frac {\pp\vp }{\pp y_0}
\right )dy' \, \chi^{nr}(z)\, dz \Big|_{y_0=s}\\
&= - \int_{-\delta_1}^{\delta_1}
\int_{\Gamma^0}
g^{n\b}
\frac {\pp \vp}{\pp y_\b}  \frac {\pp\vp }{\pp y_0}
(\chi^{nr})'(z) \omegazero(y') dy' \, \, dz \Big|_{y_0=s}.
\end{align*}
By integrating \eqref{E0bis} we thus obtain
\be \label{Enear1}
\begin{aligned}
\frac {d }{d s} E^{nr}_\ve(s)
&
=
- \int_{\delta_1}^{\delta_1}\int_{\Gamma^0}
 g^{n\b}
\frac {\pp \vp}{\pp y_\b}  \frac {\pp\vp }{\pp y_0}
(\chi^{nr})'(z)\omegazero(y') dy' dz \Big|_{y_0=s}
 \\ &\qquad\qquad\qquad+
 \int_{\Sigma^{nr}_s}
 \left( -\frac {\pp }{\pp y_0}(  f'(u_\ve^*)) \frac{\vp^2} {2\ve^2}  +
b^\b
 \frac {\pp \vp}{\pp y_\b}  \frac {\pp\vp }{\pp y_0}
 \right) \omega^{nr}_s
\\&\qquad \qquad\qquad
+
\int_{\Sigma^{nr}_s} \left(
  \frac 1 2 (\pp_0g^{\a\b})
\frac {\pp \vp}{\pp y_\a}  \frac {\pp\vp }{\pp y_\b}
-\, \eta \, \frac {\pp\vp }{\pp y_0}
\right) \omeganr.
\end{aligned}
\ee

{\bf 4}.
We next derive a (completely standard) parallel identity far from $\Gamma$.

The counterpart of $(a^{\a\b})$, defined as in \eqref{aab0}, but starting from the Minkowski
metric tensor $J$  in standard $(x,t)$ coordinates, is
just the identity tensor $(\delta^{ab})$. We thus define
\[
e^{far}_\ve(\vp) :=
\frac 12\left[ (\pp_t\vp)^2 + |\nabla _x\vp|^2- f'(u_\ve^*)\frac{\vp^2}{\ve^2}\right]
=
\frac 12\left[ (\pp_t\vp)^2 + |\nabla _x\vp|^2+ \frac \sigma {\ve^2}\vp^2 \right]
\]
where $\sigma = -f'(\pm 1)$ and
we have used the fact that
$u_\ve^* = \pm1$ in $\Sigma^{far}$. We also set
\[
E^{far}_\ve(s;\vp) :=
E^{far}_\ve(s) =
\int_{\Sigma^{far}_s}e_\ve^{far}(\vp) \ \omega_s^{far} .
\]
Then arguments like those in the derivation of \eqref{Enear1},  but significantly
easier, lead to the identity
\begin{align}
\frac {d}{ds} E^{far}_\ve(s)
 \label{Efar1}
&
= -\int_{\Sigma^{far}_s}
 \frac {\pp\vp }{\pp t}
\frac {\pp \vp}{\pp x_i} \frac {\pp \chi^{far}}{\pp x_i}  dx
+ \int_{\Sigma^{far}_s} e^{far}_\ve(\vp) \frac{\partial \chi^{far}}{\partial t} \, dx
 \\&\qquad \qquad\qquad
+
\int_{\Sigma^{far}_s} -\, \eta \, \frac {\pp\vp }{\pp y_0}
\chi^{far}(s,x)\, dx.\nonumber
\end{align}

{\bf 5}.
As mentioned earlier, we plan to construct a quantity $E(s)$ which
satisfies a differential inequality allowing for the application of Gr\"onwall's inequality.
This will have the form
\begin{equation}\label{Es.def}
E(s) := E^{nr}_\ve(s) + E^{far}_\ve(s) + \frac C\ve \int_{\Gamma^0} \gamma(s,y')^2
 \omegazero(y')dy
\end{equation}
where $\gamma:\Gamma\to \R$ is a function defined in \eqref{gamma.def}
below, arising as a component in a decomposition of $\vp$, and $C$ is also fixed below. These are needed to guarantee that
$E(s)$ bounds suitable norms of $\vp$; this is not completely straightforward,
since the quantity $-\ve^{-2}f'(u_\ve^*) \vp^2$ appearing in $E^{nr}_\ve(s)$
is negative in places.

We next
derive the relevant bounds. In doing so, we will define
$\gamma$ and fix the constant $C$ in \eqref{Es.def}.

It is convenient to decompose $E^{nr}_\ve(s)$ into pieces.
Recall from Lemma \ref{L.modferm} that  modified
Fermi coordinates coincide with actual
Fermi coordinates in $\NN_{r_2}$. To take advantage of this, we fix
$\chi_1: \R\to [0,1]$ be a smooth function such that
\[
|\partial_z \chi_1| \le C(r_2),
\qquad \chi_1(z)=1 \mbox{ for }|z|\le \frac 12 r_2\, ,
\qquad \chi_1(z)=0 \mbox{ for }|z|\ge r_2\, .
\]
We then split  $E^{nr}(s)$ into two pieces as follows.
\[
I_1 := \int_{\Sigma^{nr}_s} e^{nr}_\ve(\vp) \chi_1^2(z) \, \omega^{nr}_s,
\qquad
I_2 := \int_{\Sigma^{nr}_s} e^{nr}_\ve(\vp) (1-\chi_1^2(z)) \, \omega^{nr}_s.
\]
Concerning $I_2$, we only note that we have arranged in \eqref{uestar}, \eqref{chi0}
that $u_\ve^* = \pm 1$, and hence
$-f'(u_\ve^*) = \sigma$ when $|z|\ge r_2/2$,
so
\begin{equation}\label{I2.est}
I_2  =  \frac 12 \int_{\Sigma^{nr}_s}\left( a^{\a\b}\pp_\a\vp \, \pp_\b \vp + \frac \sigma{\ve^2}\vp^2\right)
(1 - \chi_1^2(x)) \omega^{nr}_s.
\end{equation}
Next, it follows from \eqref{aab0}
and properties of Fermi coordinates, see \eqref{ggl.gab},  that
\[
I_1 =
\frac 12 \int_{\Gamma^0}\int_{-r_2}^{r_2} \left ( a^{ab} \pp_a \vp\pp_b \vp
+ (\pp_z\vp)^2 - \frac 1  {\ve^2} f'(u_\ve^*) {\vp^2} \right )\chi_1^2(z) \,
 \omegazero(y')\, dz\,  dy'.
\]
(For the duration of the estimate of $I_1$,  all integrals are evaluated at $y_0= s$.)
We define $\bar \vp (y',z) := \vp(y',z) \chi_1(z)$. Then
\begin{align}
I_1
&=
\frac 12 \int_{\Gamma^0}\int_{-r_2}^{r_2}  a^{ab} \pp_a  \bar \vp\ \pp_b \bar \vp\
 \omegazero(y')\,  dz\, dy'  \nonumber\\
&\qquad  +
\frac 12 \int_{\Gamma^0}\int_{-r_2}^{r_2} \left (
 (\pp_z\bar \vp)^2 - \frac 1  {\ve^2} f'(u_\ve^*) {\bar \vp^2} \right )
 \omegazero(y')\,  dz\, dy'  \nonumber\\
&\qquad  -
\int_{\Gamma^0}\int_{-r_2}^{r_2}\left( \frac 12 (\chi_1')^2 \vp^2 +
\chi_1 \, \chi_1' \, \vp\, \pp_z \vp  \right)\,\omegazero(y')\, dz \, dy'
\nonumber \\
&=
I_{1,1} + I_{1,2} - I_{1,3}. \label{I1split}
\end{align}
It is clear that $I_{1,1}$ is positive definite.
We write $\vp  \pp_z\vp = \frac 12\pp_z(\vp^2)$ and integrate by parts to obtain
\begin{equation}\label{i1s}
|I_{1,3}|
 \le C \int_{\Gamma^0}\int_{-r_2}^{r_2}
\vp^2   \omegazero(y')\, \chi^{nr}(z)\,dz\, dy' \ .
\end{equation}
Since
$\vp ^2
= \chi_1^2 \vp^2 + (1 - \chi_1^2) \vp^2 = \bar \vp^2 + (1 - \chi_1^2) \vp^2$,
it follows from \eqref{I2.est} that
\[
|I_{1,3}| \le C\left( \ve^2 I_2 +  \int_{\Gamma^0}\int_{-r_2}^{r_2}
\bar \vp^2  \omegazero(y')\, dz\, dy' \right) .
\]

\medskip

We now split $\bar \vp$ as
\begin{equation}\label{bphi.split}
\bar \vp(y,z) := \bar \vp^\perp(y,z) + \gamma(y)\pp_z w_\ve(y,z),
\end{equation}
where  $w_\ve (y,z) = w(\frac z\ve - h(y))$ and
\begin{equation}\label{gamma.def}
\gamma(y) := \frac \ve{  \Xi }\int_\R \bar \vp(y,z) \pp_z w_\ve(y,z) dz,\qquad
\qquad\Xi := \int_{\R} w'^2(\zeta)d\zeta.
\ee
The definition implies that
\be\label{ortho}
\int_\R \bar \vp^\perp(y,z) \pp_z w_\ve(z) dz = 0
\end{equation}
for all $y$ and hence that
\be
\int_{\Sigma^{nr}_s} \bar \vp^2 \omega^{nr}_s =
\int_{\Gamma^0} \int_\R( \bar \vp^\perp)^2 \omegazero(y')\,dz\, dy'\  +\frac \Xi \ve \int_{\Gamma^0}\gamma^2 \omegazero(y')dy'.
\label{split.L2}\ee
In terms of $\bar \vp^\perp$ and $\gamma$, our above estimate of $I_{1,3}$
takes the form
\begin{align}
|I_{1,3}|
&\le C\ve^2 I_2 +
C\int_{\Gamma^0}\int_{\R}
(\bar \vp^\perp)^2   dz\, \omegazero(y')\,dy'
+
\frac C \ve \int_{\Gamma^0}\gamma^2 \omegazero(y')dy' \ .
\label{I12.est}\end{align}

Turning to $I_{1,2}$, and omitting ``$dz$" and ``$\omegazero(y')dy'$" when no confusion can result, we now set $f'(u_\ve^*) = f'(\pm 1) = \sigma$ for $|z|\ge r_2$, and we rewrite
\begin{align*}
I_{1,2}
&=
\frac 12 \int_{\Gamma^0}\int_\R
 (\pp_z \bar \vp^\perp)^2
- \frac 1 {\ve^2} f' (u_\ve^*) (\bar \vp^\perp )^2 \\
&\qquad\qquad +  \int_{\Gamma^0} \int_\R
 \gamma\  \pp_{z}^2 w_\ve\,  \partial_z(\bar\vp^\perp + \frac \gamma 2 \pp_z w_\ve)
-  \frac 1 {\ve^2}f' (u_\ve^*) \gamma \pp_z w_\ve (\bar\vp^\perp + \frac \gamma 2 \pp_z w_\ve).
\\
&=
I_{1,2,1} + I_{1,2,2} .
\end{align*}
We integrate by parts in the $z$ variable and use the fact that $\pp_{z}^3w_\ve  + \ve^{-2}f'(w_\ve)\pp_z w_\ve=0$
to find that
\[
I_{1,2,2}
=
\int_{\Gamma^0} \int_\R \frac 1 {\ve^2}(f'(w_\ve) -  f'(u_\ve^*)) \, \gamma\, \pp_z w_\ve \,
 (\bar\vp^\perp + \frac \gamma 2 \pp_z w_\ve).
\]
It follows from \eqref{ustar.recall}, \eqref{pico1} that $|f'(w_\ve) - f'(u_\ve^*)| \le C\ve^2$
everywhere, and as a  result,
\begin{equation}
|I_{1,2,2}|\le C \int_{\Gamma^0} \int_\R (|\gamma \pp_z w_\ve| \, |\bar\vp^\perp| + \gamma^2(\pp_z w_\ve)^2 )
\le \frac C \ve \int_{\Gamma^0} \gamma^2
+  C\int_{\Gamma^0} \int_\R (\bar \vp^\perp)^2 .
\label{I112.est}\end{equation}


Next, because $\int \bar\vp^\perp \pp_z w_\ve = 0$,
it follows from \equ{quadratic}  that there exists some $c>0$ such that
\[
\int_\R
(\pp_z \bar \vp^\perp)^2
- \frac 1 {\ve^2} f' (w_\ve) (\bar \vp^\perp )^2\, dz \, \ge
c \int_{\R} (\pp_z \bar \vp^\perp)^2 + \frac 1 {\ve^2}(\bar\vp^\perp)^2 dz
\]
for every $y'$.
Arguing as with \eqref{I112.est}, we infer that
\begin{align}
I_{1,2,1}
&\ge
c \int_{\Gamma^0}\int_{\R} (\pp_z \bar \vp^\perp)^2 + \frac 1 {\ve^2}(\bar\vp^\perp)^2
\, \omegazero(y')\,dz\,dy' \nonumber \\
&\qquad\qquad +
 \frac 12 \int_{\Gamma^0}\int_\R
 \frac 1 {\ve^2} (f'(w_\ve) - f' (u_\ve^*)) (\bar \vp^\perp )^2
 \, \omegazero(y')\,dz\,dy' \nonumber \\
 &\ge
c \int_{\Gamma^0}\int_{\R} (\pp_z \bar \vp^\perp)^2 + \frac 1 {\ve^2}(\bar\vp^\perp)^2
\, \omegazero(y')\,dz\,dy'  \label{I121}
\end{align}
for $\ve$ sufficiently small.

Next, we consider $I_{1,1}$. By compactness, there exists some $c>0$ such that,
if we write $a^{ab}_0(y) := a^{ab}(y,z)$, then for every
$\xi = (\xi_0,\ldots, \xi_{n-1})\in \R^n$,
\[
c a^{ab}_0(y) \xi_a\xi_b \le a^{ab}(y,z)\xi_a\xi_b\le c^{-1} a^{ab}_0(y) \xi_a\xi_b\qquad\mbox{ for all }(y,z)\in [0,T_1]\times \Gamma^0\times (-\delta_1,\delta_1) .
\]
Then noting that $\pp_b \pp_z w_\ve (y,z)
= -\frac 1 \ve w''(\frac z\ve - h(y)) \pp_b h(y)
= -\ve \, \pp_{zz}w_\ve\,  \pp_b h$,
\begin{align*}
I_{1,1}&\ge \frac c2\int_\R\int_{\Gamma_0}a^{ab}_0(y)\left[ \pp_a\bar\vp^\perp \, \pp_b\bar\vp^\perp
+ \pp_a \gamma\, \pp_b \gamma (\pp_z w_\ve)^2 + \ve^2 \gamma^2 \, (\pp_{zz}w_\ve)^2\,\pp_a h\, \pp_b h\right] \omegazero(y')dy' dz\\
&\qquad  +
c\int_\R\int_{\Gamma_0} a^{ab}_0(y)[
\pp_a\bar \vp^\perp \pp_b\, \gamma\, \pp_z  w_\ve
-
\ve \, \pp_a\bar \vp^\perp  \gamma \,\pp_{zz} w_\ve\, \pp_b h ]\omegazero(y')dy' dz\\
&\qquad -
c\int_{\Gamma_0}  \int_\R a^{ab}_0\, \gamma \, \pp_a\gamma\pp_b h
 \ve  \, \pp_z w_\ve \,  \pp_{zz} w_\ve\,  \omegazero(y')dy' \,  dz
\end{align*}
The last term vanishes because $w$ is odd.
In any coordinate chart we can differentiate the orthogonality condition \eqref{ortho} to
find that
\[
0 = \pp_a \int_\R
\bar \vp^\perp \pp_z  w_\ve \, dz =
\int_\R
\pp_a \bar \vp^\perp \pp_z  w_\ve \, dz  -
\int_\R \ve \bar \vp^\perp \pp_{zz}  w_\ve\pp_a h \, dz .
\]
Using this we can rewrite the middle term as
\[
-c\int_{\Gamma_0} a^{ab}_0(y)\int_\R \ve\,  \pp_{zz}  w_\ve \pp_b h
(\bar \vp^\perp \pp_a\, \gamma\,
+
 \pp_a\bar \vp^\perp  \gamma ) \omegazero(y')dy' dz.
\]
For any $f$, we will write $|D_y f|^2 = a^{ab}\pp_a f\, \pp_b f$.
Since  $\int_\R (\pp_{zz}w_\ve)^2\,dz \le C\ve^{-3}$ and $|D_y h|\le C\ve$,
the above is bounded in absolute value by
\[
C\ve \int_{\Gamma^0}\int_\R (\bar\vp^\perp)^2 + |D_y\bar\vp^\perp|^2
 \omegazero(y')dy' dz
+C \int_{\Gamma^0} \gamma^2 + |D_y\gamma|^2
 \omegazero(y')dy' .
\]
Clearly $|D_y\bar \vp^\perp|^2 + (\pp_z\bar \vp^\perp)^2 \approx |D\bar\vp^\perp|^2 = (\pp_t \bar\vp^\perp)^2 + |\nabla_x \bar\vp^\perp|^2$, so we may combine the above
terms from $I_{1,1}$ with other terms estimated in   \eqref{I12.est}, \eqref{I112.est}, \eqref{I121},
to find that for $\ve$ sufficiently small,
\be\label{I1.est}
\begin{aligned}
I_1
&\ge -C\ve^2 I_2 - \frac C \ve \int_{{\Gamma^0}}\gamma^2  \, \omegazero(y')\,dy'
+
\frac c \ve \int_{{\Gamma^0}} |D_y\gamma|^2 \
  \omegazero(y')\,dy'
\\
&\qquad +
c \int_{{\Gamma^0}}\int_{\R} |D\bar \vp^\perp|^2 + \frac 1 {\ve^2}(\bar\vp^\perp)^2
 \, \omegazero(y')\,dz\,dy' \ .
\end{aligned}
\ee

In view of this and \eqref{I2.est},
we may fix a particular constant $C$ such that if we define $E(s)$
as in \eqref{Es.def}, which we recall is
\[
E(s) := E^{nr}_\ve(s) + E^{far}_\ve(s) +  \frac C
\ve  \int_{{\Gamma^0}}\gamma^2(s, y') \omegazero(y') dy' ,
\]
then for every $s\in [0,T_1]$, after adjusting $c$ we have
\begin{align}
E(s) &\ge
\frac c \ve \int_{{\Gamma^0}}\left(  \gamma^2 +  |D_y\gamma|^2 \right) \omegazero(y')dy' \Big|_{y_0=s}
+
c \int_{\Sigma_s}(1 - \chi_1^2)
\left( |D\vp|^2 +
 \frac 1 {\ve^2}  \vp^2 \right)
\omega_s
\nonumber\\
&\qquad\qquad
c \int_{{\Gamma^0}}\int_{\R} |D\bar \vp^\perp|^2 + \frac 1 {\ve^2}(\bar\vp^\perp)^2
 \, \omegazero(y')\,dz\,dy' \ .
\label{Et.est}
\end{align}
for $\omega_s := \omega^{nr}_s+ \omega^{far}_s$.
(We have extended $\chi_1$ to a function defined on all of $\Sigma_s$,
and vanishing on $\Sigma_s^{far}$.)

{\bf 6}.
We now want to show that
\[
\frac d{ds}E(s) \le C E(s).
\]
We first consider terms arising from $\frac d{ds}E^{nr}_\ve(s)$.
We rewrite the localized energy identity \eqref{Enear1} as
\[
\frac{d}{d s}E^{nr}_\ve(s) =
\int_{\Sigma^{nr}_s} \FF \, \omega^{nr}_s
-\int_{\delta_1}^{\delta_1}\int_{\Gamma^0}
 g^{n\b}
\frac {\pp \vp}{\pp y_\b}  \frac {\pp\vp }{\pp y_0}
(\chi^{nr})'(z)\omegazero(y') dy' dz \Big|_{y_0=s}
\]
where
\[
\FF :=
 -\frac {\pp }{\pp y_0}(  f'(u_\ve^*)) \frac{\vp^2} {2\ve^2}  +
b^\b
 \frac {\pp \vp}{\pp y_\b}  \frac {\pp\vp }{\pp y_0}
+
  \frac 1 2 (\pp_0g^{\a\b})
\frac {\pp \vp}{\pp y_\a}  \frac {\pp\vp }{\pp y_\b}
-\, \eta \, \frac {\pp\vp }{\pp y_0}.
\]
Recall that  $\chi_1 = 0$ in $\mbox{supp}(\chi^{nr})' \subset \{(y,z) : 2r_1\le z\le 4r_1\}$.
It follows that
\begin{align*}
\left|
\int_{\delta_1}^{\delta_1}\int_{\Gamma^0}
 g^{n\b}
\frac {\pp \vp}{\pp y_\b}  \frac {\pp\vp }{\pp y_0}
(\chi^{nr})'(z)\omegazero(y') dy' dz
\right|_{y_0=s}
\le
C \int_{\Sigma_s}(1 - \chi_1^2)
|D\vp|^2 \omega_s \nonumber
\le  CE(s).
\end{align*}
Straightforward estimates show that
\[
|\FF| \le C \left( |D\vp|^2 + \frac 1 {\ve^2}\vp^2\right) + C \eta^2 \mbox{ on the support of }1-\chi_1^2.
\]
(In particular, the construction of $u_\ve^*$ implies that $\partial_{0} f'(u_\ve^*) = 0$
on this set.) We use \eqref{Et.est}  to deduce that
\begin{equation}\label{Enr.est1}
\frac{d}{ds}E^{nr}_\ve(s) =
\int_{\Sigma^{nr}_s} \FF\  \chi_1^2(z) \omega^{nr}_s
+ C \int_{\Sigma^{nr}_s} (1-\chi_1^2)\eta^2 \omega^{nr}_s+ C E(s).
\end{equation}
We now consider various terms in the first integral  on the right-hand side above.
First, again writing $\chi_1 \vp = \bar \vp = \bar \vp^\perp +\gamma \,\pp_zw_\ve$, we decompose
\[
 \int_{\Sigma^{nr}_s}
\pp_0(  f'(u_\ve^*)) \frac{\vp^2} {2\ve^2}  \chi_1^2  \omega^{nr}_s
= I_{3,1} + I_{3,2}
\]
where
\begin{align*}
I_{3,1} &= \frac 1{2\ve^2} \int_{\Sigma^{nr}_s}\pp_0(  f'(u_\ve^*))[(\bar \vp^\perp)^2 +2\bar \vp^\perp  \gamma\,\pp_zw_\ve] \omega^{nr}_s,
\\ 
I_{3,2} &= \frac 1{2\ve^2} \int_{\Sigma^{nr}_s} \pp_0(  f'(u_\ve^*))
\gamma^2(\pp_zw_\ve)^2
\omega^{nr}_s .
\end{align*}
It follows from \eqref{ustar.recall}, \eqref{pico1} that
\be
\pp_0(  f'(u_\ve^*)) = (f''(w_\ve) + O(\ve^2))(\ve \pp_z w_\ve\, \pp_0 h + \pp_0\phi)
= f''(w_\ve) \ve \pp_z w_\ve \, \pp_0 h + O(\ve^2).
\label{pp0f}\ee
Moreover, $|f''(w_\ve) \ve \pp_z w_\ve \, \pp_0 h| \le C\ve$.
It follows that
\begin{align*}
|I_{3,1}|&
\le \frac C \ve\int_{\Sigma^{nr}_s}(\bar \vp^\perp)^2 + |\bar \vp^\perp| \ |\gamma \pp_z w_\ve|) \omega^{nr}_s
\le  C\int_{\Sigma^{nr}_s}\frac {(\bar \vp^\perp)^2}{\ve^2} + \gamma^2 (\pp_z w_\ve )^2 \omega^{nr}_s
\\
&\le \frac C{\ve^2}\int_{\Sigma^{nr}_s}(\bar \vp^\perp)^2 \omega^{nr}_s + \frac C \ve\int_{\Gamma^0}\gamma^2(s,y') \, \omega^0(y')dy' \\
&\overset{\eqref{Et.est}}\le CE(s) .
\end{align*}
We next use \eqref{pp0f} to write
\begin{align*}
I_{3,2}
&=\frac 1{2\ve} \int_{\Sigma^{nr}_s } [ f''(w_\ve) \pp_zw_\ve \, \pp_0h + O(\ve)] \gamma^2 (\pp_z w_\ve)^2 \, \omega^{nr}_s \, .
\end{align*}
Since $f$ and $w$ are odd, and $w_\ve = w(\frac z\ve - h(y))$, we find that
$z\mapsto f''(w_\ve) (\pp_z w_\ve)^3$ is odd, modulo a translation by $h(y) = O(\ve)$.
Since it decays exponentially, its integral over $z\in(-r_2,r_2)$ is thus exponentially small.
We then easily deduce that
\[
|I_{3,2}| \le \frac C\ve\int_{\Gamma^0}\gamma^2(s,y') \, \omega^0(y')dy'
\]
and hence that
\be
\left| \int_{\Sigma^{nr}_s}
\pp_0(  f'(u_\ve^*)) \frac{\vp^2} {2\ve^2}  \chi_1^2  \, \omega^{nr}_s\right|
\le C E(s) .
\label{I3.est}\ee


We next consider the convection term.
Recall that by definition,
\begin{align*}
b^\b :=\frac \omegazero { \sqrt {|\det g|}}g^{\a\b}\frac \pp {\pp y_\a} \left(\frac{ \sqrt{|\det g|}}\omegazero  \right).
\end{align*}
In particular, since $g^{\a n} = \delta^{\a n}$ in supp$(\chi_1)$
and  $\omegazero$ is independent of $z$,
\[
b^n(y,z) =
\frac{1} {\sqrt {|\det g(y,z)|  }}
\frac {\pp}{\pp z}  \sqrt{|\det g(y,z)|}  \ ,
\]
and thus it follows from \eqref{dz.gnn} that
\[
|b^n(y,z)|\le C|z| \qquad\mbox{ in }\mbox{supp}(\chi_1).
\]
Therefore
\begin{align*}
\left| b^\gamma \frac{\pp \vp}{\pp y_\gamma} \frac{\pp  \vp}{\pp y_0}\right| \chi_1^2
&\le  C \left( a^{ab} \frac{\pp \bar \vp}{\pp y_a}\frac{\pp \bar \vp}{\pp y_b}  + z^2 (\frac{\pp \vp}{\pp z})^2\chi_1^2\right)\\
&=  C \left( a^{ab} \frac{\pp \bar \vp}{\pp y_a}\frac{\pp \bar \vp}{\pp y_b}  + z^2 (\frac{\pp \bar \vp}{\pp z})^2 -  z^2 \chi_1'^2 \vp^2 -  z^2 \chi_1 \chi_1' \frac{\pp}{\pp z} (\vp^2)\right) .
\end{align*}
We again write $\bar \vp = \bar \vp^\perp + \gamma(y) w_\ve'(z)$.
Using the fact that $\int_\R z^2 w_\ve''^2 dz  = \frac C \ve $,
and arguing as in the estimate of $I_{1,3}$ above, we find that
\[
\left| \int_{\Sigma^{nr}_s}
b^\gamma \frac{\pp \vp}{\pp y_\gamma} \frac{\pp  \vp}{\pp y_0} \chi_1^2\, \omega^{nr}_s \right|
\overset {\eqref{Et.est}}\le C E(s)\ .
\]
Next, again because $g^{\alpha n}=\delta^{\a n}$
in $\mbox{supp}(\chi_1)$,
it is clear that
\[
\left| \int_{\Sigma^{nr}_s}
  (\pp_0g^{\a\b})
\frac {\pp \vp}{\pp y_\a}  \frac {\pp\vp }{\pp y_\b} \chi_1^2(z) \omega^{nr}_s \right|
=
\left| \int_{\Sigma^{nr}_s}
  (\pp_0g^{ab })
\frac {\pp \bar\vp}{\pp y_a}  \frac {\pp\bar \vp }{\pp y_b}  \omega^{nr}_s \right|
\overset {\eqref{Et.est}} \le C E(s) .
\]
Finally, since $\frac\pp{\pp y_0}\bar\vp =\frac\pp{\pp y_0}(\chi_1\vp) = \chi_1
\frac\pp{\pp y_0}\vp$,
\begin{align*}
\left| \int_{\Sigma^{nr}_s}
\frac{\pp\vp}{\pp y_0}\eta  \chi_1^2 \ \omega^{nr}_s \right|
&\le
\left| \int_{\Sigma^{nr}_s}
(\frac{\pp\bar \vp}{\pp y_0})^2 \  + \chi_1^2\eta^2  \, \omega^{nr}_s \right|
\\
&
\overset {\eqref{Et.est}}\le C E(s) +
 C \int_{\Sigma^{nr}_s} \chi_1^2\eta^2 \omega^{nr}_s
 \end{align*}
Putting the above estimates into \eqref{Enr.est1}, we
obtain
\[ 
\frac d{ds} E^{nr}_\ve(s) \le    C E(s)+  C \int_{\Sigma^{nr}_s} \eta^2 \omega^{nr}_s.
\] 

{\bf 7}.
Similar but  easier arguments, using \eqref{Efar1} and \eqref{Et.est}, lead to the estimate
\[ 
\frac d{ds} E^{far}_\ve(s) \le    C E(s)+  C \int_{\Sigma^{far}_s} \eta^2 \omega^{far}_s.
\] 
The point is that  $\mbox{supp}(D\chi^{far})\subset \{ \chi_1^2 = 0\}$.
Thus  terms in \eqref{Efar1} containing derivatives of $\chi^{far}$
are easily estimated by
$\int_{\Sigma_s}(1 - \chi_1^2)
\left( |D\vp|^2 +
 \frac 1 {\ve^2}  \vp^2 \right)
\omega_s \le CE(s)$.

Similarly,
\[ 
\frac d {d s}  \frac 1 \ve \int_{\Gamma^0} \gamma^2 \omegazero(y')dy'
\le
  \frac 1 \ve \int_{\Gamma^0} ( \gamma^2+  (\pp_{0}\gamma)^2) \omegazero(y')dy' \le C E(s).
\] 
Combining the last three inequalities,
we conclude that
\begin{align}\label{E2}
\frac  d{ds} E(s) \le C E(s) + C \int_{\Sigma_s} \eta^2\  \omega_s
\end{align}
Then it follows from Gr\"onwall's inequality that
\begin{equation}\label{E3}
E(s) \le e^{Cs} E(0) +
C \int_0^s \int_{\Sigma_s} e^{C(s-\sigma)} \eta^2 \, \omega_\sigma  \, d\sigma
\end{equation}
for all $0\le s < T_1$.

{\bf 8.} The conclusion \eqref{crude.linear} of the Proposition follows from \eqref{E3}.
We sketch the straightforward verification.

Recall that  the $n$-form $\omega_s$ is uniformly comparable to the induced
Euclidean $n$-dimensional area in $\Sigma_s$, a fact that we will use repeatedly and without further mention.
Thus for example  it is immediate that
\[
 \int_0^s \int_{\Sigma_\sigma} e^{C(s-\sigma)} \eta^2 \, \omega_\sigma  \, d\sigma
\le C \int_0^s\left(\int_{\Sigma_s} \eta^2 \right) d\sigma \qquad \mbox{ for all }s\in [0,T_1],
\]
where on the right-hand side, we implicity integrate with respect to the Euclidean area, as in
\eqref{crude.linear}.
Next we claim that the left-hand side of \eqref{crude.linear} is bounded by $C E(s)$.
Toward this end, first note from \eqref{split.L2} that
\begin{equation}\label{L2.full}
\int_{\Sigma_s}\vp^2 \omega_s = \int_{\Sigma_s}(1-\chi_1^2) \vp^2  \omega_s +
\int_{\Gamma^0} \int_\R( \bar \vp^\perp)^2 \omegazero(y')\,dz\, dy'\  +\frac \Xi \ve \int_{\Gamma^0}\gamma^2 \omegazero(y')dy'.
\ee
Thus \eqref{Et.est} implies that $\int_{\Sigma_s}\vp^2 \omega_s \le C E(s)$.

Next, we write
$|D\vp|^2 = (1-\chi_1^2)|D\vp|^2 + \chi_1^2 |D\vp|^2$. The first term is immediately controlled $C E(s)$,
due to \eqref{Et.est}. For the second term we may compute in modified Fermi coordinates, in which we
have
\begin{align*}
\chi_1^2|D \vp|^2
&\le C \chi_1^2 [ a^{ab} \pp_a\vp \, \pp_b\vp + (\pp_z\vp)^2]
\\
&\le
C\left[ a^{ab} \pp_a\bar \vp \, \pp_b\bar\vp + (\pp_z\bar \vp)^2 + \chi_1(z)\chi_1''(z)\vp^2 - \pp_z(\chi_1\chi_1' \vp^2) \right]
\end{align*}
Note that $\int_{\Sigma^{nr}_s} a^{ab} \pp_a\bar \vp \, \pp_b\bar\vp \omega^{nr}_s$ is exactly the
term $I_{1,1}$ that appeared above in the lower bound for $E(s)$. There it was convenient to split it into
several pieces (from which we obtained separate control over $(\pp_0\gamma)^2$, needed above),
but if we keep that term as it is, then our earlier estimates of all the other contributions show that  $I_{1,1} \le C E(s)$.
The terms involving derivatives of $\chi_1$ are handled as in our estimate of $I_{1,3}$.
Finally, we write $(\pp_z\bar \vp)^2 = (\pp_z\bar \vp^\perp + \gamma \pp_{zz}w_\ve)^2
\le 2 (\pp_z\bar \vp^\perp)^2+ 2 \gamma^2 (\pp_{zz}w_\ve)^2$.
By integrating and combining with the above estimates, we finally conclude
that $\ve^2\int_{\Sigma_s}\chi_1^2 |D\vp|^2 \le C E(s)$, with the loss of a factor of $\ve^2$
coming from the term $2 \gamma^2(\pp_{zz}w_\ve)^2$.

To finish the verification of \eqref{crude.linear}, we must check that
\begin{align}
E(0)
&=
\int_{\Sigma^{nr}_0} e^{nr}_\ve(\vp)\omega^{nr}_0
+
\int_{\Sigma^{far}_0} e^{far}_\ve(\vp)\omega^{far}_0
+\frac C \ve \int_{\Gamma_0}\gamma^2(0,y')\omegazero(y')dy'
\nonumber \\
&\le C\int_{\R^n}\left[ |\nabla_x \vp_0|^2 +  |\vp_1|^2+ \frac 1{\ve^2}\vp_0^2 \right] dx .
\label{rhs1}\end{align}
Indeed, it is immediate from the definition \eqref{enr.def} that
\[
e^{nr}_\ve(\vp)(y,z) \le C\left[  (\pp_t \vp)^2 + |\nabla_x \vp|^2 + \ve^{-2}\vp^2\right](\Phi(y,z))
\]
Since $\Phi(y,z)\in \{0\}\times \R^n$ when $y_0=0$, see \eqref{y0=tbis}, the initial condition $(\vp, \pp_t\vp)|_{t=0} = (\vp_0,\vp_1)$
implies that
\[
\int_{\Sigma^{nr}_0} e^{nr}_\ve(\vp)\omega^{nr}_0\le C\int_{\R^n}\left[ |\nabla_x \vp_0|^2 +  |\vp_1|^2+ \frac 1{\ve^2}\vp_0^2 \right] dx
\]
The corresponding estimate for $e_\ve^{far}$ on $\Sigma^{far}_0$ is immediate.
We conclude from these facts and \eqref{L2.full} that \eqref{rhs1} holds.

\end{proof}

\begin{remark}\label{linear.sharper}
In view of \eqref{Et.est}, it follows from \eqref{E3} that
\[ 
\begin{aligned}
&\frac c \ve \int_{{\Gamma^0}}\left(  \gamma^2 +  |D_y\gamma|^2 \right) \omegazero(y')dy' \Big|_{y_0=s}
+
c \int_{\Sigma_s}(1 - \chi_1^2)
\left( |D\vp|^2 +
 \frac 1 {\ve^2}  \vp^2 \right)
\omega_s
\\
&
\qquad\qquad \qquad\qquad \qquad\qquad +
c \int_{{\Gamma^0}}\int_{\R} |D\bar \vp^\perp|^2 + \frac 1 {\ve^2}(\bar\vp^\perp)^2
 \, \omegazero(y')\,dz\,dy' \\
&\qquad \qquad \le
C \int_0^s\Big( \int_{\Sigma_\sigma} \eta^2 \Big) d\sigma +
C\int_{\R^n}\left[ |\nabla_x \vp_0|^2 +  |\vp_1|^2+ \frac 1{\ve^2}\vp_0^2 \right] dx .
\end{aligned}
\]
This is considerably stronger  than \eqref{crude.linear}
\end{remark}

\subsection*{Higher order estimates  }

We need estimates similar to \eqref{crude.linear} for higher order space derivatives.
It is convenient to introducing the $L^2$-$L^\infty$ norm for functions $\eta$ defined on $\Sigma$,
$$
\|\eta \|_{L^\infty L^2(\Sigma)}\ := \ \sup_{0\le s\le T_1} \| \eta\|_{L^2(\Sigma_s)}.
$$
Then from \eqref{crude.linear} we get the  $L^{\infty}$-$ H^1$-estimate for the solution $\vp$ of Problem \equ{linear.eqn}
\be\label{sti}
\begin{aligned}
\|\vp \|_{L^\infty L^2(\Sigma)}  + \ve \|\vp_t \|_{L^\infty L^2(\Sigma)}+  \ve \|D_x\vp \|_{L^\infty L^2(\Sigma)}\ \le  \  \\
 C \Big[
\|\eta \|_{L^\infty L^2(\Sigma)}+
 \frac 1{\ve}  \big ( \, \|\vp_0\|_ {L^2(\R^n)}  +  \ve \|D_x \vp_0\| _ {L^2(\R^n)}  +  \ve \|\vp_1\|_ {L^2(\R^n)}  \big ) \Big].
 \end{aligned}
\ee
Assuming further smoothness on initial data and the right hand side we can derive higher order estimates as follows.
Let us differentiate twice the equation. We write $D_i = \pp_{x_i}$, $D_{ij} =\pp^2_{x_i x_j}$. Then we get
$$
\begin{aligned}
L [ D_{ij} \vp]  \  =& \  D_{ij} \eta  -  \ve^{-2}  \vp D_{ij}( f'(u_\ve^*)) -  \ve^{-2} ( D_i\vp D_j( f'(u_\ve^*)) + D_j\vp D_i( f'(u_\ve^*)) )\inn \Sigma, \\
(D_{ij}\vp, \pp_tD_{ij}\vp)\Big|_{t=0}\ =&\  (D_{ij}\vp_0, D_{ij} \vp_1)  .
\end{aligned}
$$
Using estimate \equ{sti} and the facts  $D_{i}( f'(u_\ve^*))= O(\ve^{-1})$, $D_{ij}( f'(u_\ve^*))=O(\ve^{-2})$ we get
\be\label{bound1} \begin{aligned}
 &  \| D_x^2 \vp \|_{L^\infty L^2(\Sigma)} + \ve \|D_x^2\vp_t \|_{L^\infty L^2(\Sigma)}+  \ve \|D^3_x\vp \|_{L^\infty L^2(\Sigma)}\\
 &\le   \ \frac C{\ve^4}  \Big [
\| \eta \|_{L^\infty L^2(\Sigma)}  + \ve^4  \|D^2_x \eta \|_{L^\infty L^2(\Sigma)} \big ]   \\ & \quad +
   \frac C{\ve^5} \Big[
 \,  \|\vp_0\|_ {L^2(\R^n)} + \ve \|D_x \vp_0\| _ {L^2(\R^n)}+  \ve \|\vp_1\|_ {L^2(\R^n)} \big ]   \\  & \quad   +   \frac C\ve \Big [ \|D^2_x \vp_0\| _ {L^2(\R^n)} +  \ve \|D^3_x \vp_0\| _ {L^2(\R^n)}+ \ve \|D_x^2 \vp_1\| _ {L^2(\R^n)} \Big] .
 \end{aligned}\ee
Let us consider for $m\ge 0$  the following $L^\infty$-$H^m$ norm
$$
\|\eta\|_{L^\infty H^m(\Sigma) }  :=    \sum_{j=0}^m \|D^j\eta\|_{L^\infty L^2(\Sigma) } .
$$
Then from  \equ{bound1} and interpolating with bound \equ{sti}, the following estimate readily follows:
\be \label{bound2}
\|\vp\|_{L^\infty H^3(\Sigma) }\ \le\  \frac C{ \ve^5} \Big [ \|\eta\|_{L^\infty H^2(\Sigma) } +  \|\vp_1\|_{H^2(\R^n) } +  \ve^{-1}\|\vp_0\|_{H^3(\R^n) }\Big ]
\ee
An induction argument (differentiating an even number of times $m$) yields
$$ \begin{aligned}
 &  \| D_x^m \vp \|_{L^\infty L^2(\Sigma)} + \ve \|D_x^m\vp_t \|_{L^\infty L^2(\Sigma)}+  \ve \|D^{m+1}_x\vp \|_{L^\infty L^2(\Sigma)}\\
 &\le   \ \frac C{\ve^{2m}}  \Big [
\| \eta \|_{L^\infty L^2(\Sigma)}  + \ve^{2m}  \|D^m_x \eta \|_{L^\infty L^2(\Sigma)} \big ]   \\ & \quad +
   \frac C{\ve^{2m+1}} \Big[
 \,  \|\vp_0\|_ {L^2(\R^n)} + \ve \|D_x \vp_0\| _ {L^2(\R^n)}+  \ve \|\vp_1\|_ {L^2(\R^n)} \big ]   \\  & \quad   +   \frac C\ve \Big [ \|D^{m}_x \vp_0\| _ {L^2(\R^n)} +  \ve \|D^{m+1}_x \vp_0\| _ {L^2(\R^n)}+ \ve \|D_x^m \vp_1\| _ {L^2(\R^n)} \Big] .
 \end{aligned}$$
As in \equ{bound2} we finally find the estimate

\begin{lemma}
The solution $\vp$ of Equation $\equ{linear.eqn}$
satisfies the estimate
\be \label{bound22}
\|\vp\|_{L^\infty H^{m+1}(\Sigma) }\ \le\  \frac C{ \ve^{2m+1}} \Big [ \|\eta\|_{L^\infty H^m(\Sigma) } +  \|\vp_1\|_{H^m(\R^n) } +  \ve^{-1}\|\vp_0\|_{H^{m+1}(\R^n) }\Big ]
\ee
for each even integer $m$.
\end{lemma}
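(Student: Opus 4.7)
The plan is to prove \eqref{bound22} by induction on the even integer $m$, with base case $m=0$ given directly by \eqref{sti}, and inductive step going from $m$ to $m+2$ by repeating exactly the derivation that takes \eqref{sti} into \eqref{bound1} and then into \eqref{bound2}. First I would apply a generic spatial derivative $D_x^\alpha$ of order $|\alpha|=m$ to both sides of $L_\ve[\vp]=\eta$ and use the Leibniz rule to obtain
\[
L_\ve[D_x^\alpha \vp]\ =\ D_x^\alpha \eta\ -\ \ve^{-2}\sum_{0<\beta\le \alpha}\binom{\alpha}{\beta}D_x^\beta(f'(u_\ve^*))\,D_x^{\alpha-\beta}\vp,
\]
with initial data $(D_x^\alpha \vp,\,\pp_t D_x^\alpha \vp)\big|_{t=0}=(D_x^\alpha \vp_0,\,D_x^\alpha\vp_1)$. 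Since in $\NN$ we have $u_\ve^*(x,t)=w(\ve^{-1}z-h(y))+O(\ve^2)$ in smooth sense by \eqref{ustar.recall}--\eqref{pico1}, and $u_\ve^*\equiv \pm1$ outside, every spatial derivative of $f'(u_\ve^*)$ costs one factor $\ve^{-1}$: $\|D_x^\beta(f'(u_\ve^*))\|_{L^\infty(\Sigma)}\le C\ve^{-|\beta|}$.

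Next I would apply the basic estimate \eqref{sti} to each equation for $D_x^\alpha \vp$ and sum over $|\alpha|=m$. The initial-data contribution is $\ve^{-1}$ times $\|D_x^m\vp_0\|_{L^2}+\ve\|D_x^{m+1}\vp_0\|_{L^2}+\ve\|D_x^m\vp_1\|_{L^2}$, while the commutator source contributes a sum of terms of size $\ve^{-2-k}\|D_x^{m-k}\vp\|_{L^\infty L^2(\Sigma)}$ for $1\le k\le m$. The worst of these, at $k=m$, is $\ve^{-2-m}\|\vp\|_{L^\infty L^2(\Sigma)}$, and this is controlled via the inductive hypothesis at order $m-2$ (with one final appeal to \eqref{sti} for $m=0$), which brings in an additional $\ve^{-(2(m-2)+1)}\le \ve^{-(2m-3)}$ on the data side. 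Combining these factors reproduces the displayed intermediate estimate immediately preceding the lemma statement, and then summing $\|D_x^j\vp\|_{L^\infty L^2(\Sigma)}$ for $0\le j\le m+1$ and bounding $\|\vp_0\|_{L^2}\le \|\vp_0\|_{H^{m+1}}$, $\|\vp_1\|_{L^2}\le \|\vp_1\|_{H^m}$, etc., yields \eqref{bound22}.

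The main technical point — and the reason the excerpt restricts to even $m$ — is the bookkeeping of the negative powers of $\ve$. Each inductive step from $m$ to $m+2$ multiplies the previous bound by a factor $C\ve^{-2}$ applied to the lower-order initial-data norms (one $\ve^{-1}$ from each of the two fresh derivatives landing on $f'(u_\ve^*)$, with the prefactor $\ve^{-2}$ always present and already accounted for in $L_\ve$), producing exactly the $\ve^{-(2m+1)}$ growth claimed. Restricting to even $m$ avoids any fractional-order Gagliardo–Nirenberg interpolation: the inductive step only requires iterating the integer-order estimates \eqref{sti} and \eqref{bound1}, which are already established by pure energy identities. The delicate verification is that the intermediate commutator terms $D_x^\beta(f'(u_\ve^*))\,D_x^{\alpha-\beta}\vp$ with $1\le |\beta|<m$ can all be absorbed into the inductive hypothesis at strictly lower order without worsening the $\ve$-count, which follows from the fact that in the inductive bound at level $m-2$ the norms of $\vp$ of order up to $m-1$ are already controlled by the right-hand side of \eqref{bound22} with $m$ replaced by $m-2$, and these bounds are strictly stronger than what we need to close the estimate at level $m$.
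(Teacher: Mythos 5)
Your proposal follows essentially the same route as the paper: differentiate the equation $m$ times, use the Leibniz rule to collect commutator terms, bound $D_x^\beta(f'(u_\ve^*))$ by $C\ve^{-|\beta|}$, apply the basic energy estimate \eqref{sti} to the differentiated equation, and close by an inductive appeal to the lower-order bounds --- exactly the paper's derivation of \eqref{bound1} and \eqref{bound2} followed by the stated induction in steps of two. The identification of the worst commutator at $k=m$, the use of \eqref{sti} as the engine, and the observation that even $m$ keeps the argument at the level of integer-order energy estimates all agree with the paper.

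One bookkeeping slip in your parenthetical explanation deserves a correction, although it does not affect your stated conclusion. You write that each step from $m$ to $m+2$ costs a factor $C\ve^{-2}$, attributing $\ve^{-1}$ to each of the two new derivatives hitting $f'(u_\ve^*)$ and asserting that the $\ve^{-2}$ prefactor is ``already accounted for in $L_\ve$.'' That prefactor is \emph{not} absorbed: once a derivative lands on $f'(u_\ve^*)$, the resulting term $\ve^{-2}D_x^\beta(f'(u_\ve^*))D_x^{\alpha-\beta}\vp$ is a genuine source term, outside the operator $L_\ve[D_x^\alpha\vp]$, so the $\ve^{-2}$ counts in full. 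The true per-step factor is therefore $\ve^{-2}\cdot\ve^{-2} = \ve^{-4}$, consistent with the jump from $\ve^{-(2m+1)}$ to $\ve^{-(2(m+2)+1)} = \ve^{-(2m+5)}$ in \eqref{bound22}; one can check it directly on \eqref{bound1}, where the $k=2$ commutator contributes $\ve^{-4}\|\vp\|_{L^\infty L^2}$ before \eqref{sti} converts it into the $\ve^{-5}$ data terms. Your final displayed exponent $\ve^{-(2m+1)}$ is nonetheless the correct one, so the argument closes as intended once this factor is corrected.
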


\section{The proof of Theorem \ref{teo1}}
Now we have all the ingredients to proceed to the proof of Theorem \ref{teo1}.  We look for a solution to Problem
$S(u)=0$  close the approximation
$u_\ve^*$  given by \equ{uestar}, where the number $k$ will be chosen sufficiently large.  We look for a solution of the form
$$
u(x,t) =  u_\ve^*(x,t) + \vp(x,t) .
$$
In terms of $\vp$ the equation becomes
\be
L_\ve [\vp]  +   \ve^{-2}S(u_*)  +  \ve^{-2} N(\vp,x,t) = 0\inn \Sigma
\label{abc}\ee
where $$N(\vp,x,t) =  f(u_\ve^*(x,t)+\vp)  - f'(u_\ve^*(x,t))\vp -f(u_\ve^*(x,t))$$
$$ L_\ve [\vp]  = \Box \vp  + \ve^{-2} f'(u_\ve^*)\vp .$$

\medskip
Let us consider the unique solution $\vp =  \TT [\eta]$ of the linear problem
\begin{equation}
L_\ve [\vp] + \eta=0 \ \ \mbox{ in } \Sigma,
\qquad\qquad
(\vp, \pp_t\vp)\Big|_{t=0} = (0,0)  .
\label{linear.eqnB}\end{equation}
which we have estimated in Proposition \ref{prop.linear}.
Problem \equ{abc}  with initial data $(\vp, \pp_t\vp)\Big|_{t=0} = (0,0) $ can then be written as the fixed point problem
\be
\vp =  \TT [   \ve^{-2} S(u_\ve^*)  + \ve^{-2} N(\vp, \cdot ) ]  =: \mathcal M (\vp), \quad \vp \in \BB
\label{ss1}\ee
We will solve this problem by contraction mapping principle in a suitable space $\BB$ of small functions defined on $\Sigma$.
We consider the Banach space $L^\infty H^m(\Sigma)$ endowed with its natural norm and consider the region
$$
\BB  =  \{ \vp\in L^\infty H^m(\Sigma)\ /\ \|\vp\|_{L^\infty H^m(\Sigma) } \le \ve^{\frac k2} \}
$$
where $k$ is the number in the definition of $u_\ve^*$ in \equ{uestar}.
 Let us fix a number  $m>n/2$. Among other consequences, this implies that $L^\infty H^m(\Sigma) $ is embedded into $L^\infty (\Sigma)$ and
\be\label{cotaHm}
\| \vp \psi  \|_{L^\infty H^m(\Sigma)} \le C \| \vp\|_{{L^\infty H^m(\Sigma)}}\|\psi \|_{L^\infty H^m(\Sigma)}.
\ee

\medskip
From Proposition \ref{prop.as} (specifically bound \equ{pico2}), together with the fact that errors are exponentially small in $\ve$ where the cut-off is not constant,
we find that
$$
\|  \ve^{-2} S(u_\ve^*) \|_{L^\infty H^m(\Sigma) }\ \le\  C\ve^k.
$$
Next, we claim that for $\vp, \tilde \vp \in \BB$,
\be
\| N(\vp, \cdot) - N(\tilde \vp, \cdot)\|_{L^\infty H^m(\Sigma)} \le   C \ve^{\frac k 2 -m}\|\vp - \tilde \vp\|_{L^\infty H^m(\Sigma)}
\label{nest}\ee
To prove this, observe that
\[
N(\vp)-N(\tilde \vp) =\left(\int_0^1 \left[  f'(u_\ve^* + \sigma \vp +(1-\sigma)\tilde \vp)  - f'(u_\ve^*)\right] d\sigma\right) \left( \vp - \tilde \vp\right).
\]
In view of \equ{cotaHm}, to establish \equ{nest}
it suffices to observe that
\[
\|f'(u_\ve^*(t,\cdot) + \psi)  - f'(u_\ve^*) \|_{{L^\infty H^m(\Sigma)}}\ \le\  C \ve^{-m}\|\psi\|_{L^\infty H^m(\Sigma)}\ \le\     C \ve^{\frac k2-m},
\qquad\mbox{ for all }\psi \in \BB .
\]
which follows from a direct computation using Leibnitz rule, Sobolev embedding and the fact that $D^m _x u_\ve^* = O(\ve^{-m})$.

\medskip
At this point we fix a number $k$ with $k> 6m +2$. Let us consider the operator $\mathcal M[\vp]$ defined on $\BB$ in formula \equ{ss1}.
Estimates \equ{bound22} and \equ{nest} lead to
$$
\| \mathcal M[\vp] - \mathcal M[\ttt \vp]\|_{L^\infty H^m (\Sigma)}\ \le \  C \ve^{\frac k 2 -3m -1 }\|\vp - \tilde \vp\|_{L^\infty H^m(\Sigma)} \foral \vp,\ttt\vp\in \BB .
$$
and
$$
\| \mathcal M[0]|_{L^\infty H^m (\Sigma)}\ \le\ C\ve^{k-2m-1} .
$$
It follows that for all sufficiently small $\ve$ we get that $\mathcal M(\BB)\subset \BB$ and that $\mathcal M$ is a contraction mapping in $\BB$. Hence Problem \equ{ss1} has a unique solution. The conclusion of Theorem \ref{teo1} readily follows.  \qed

\medskip
This proof applies equally well to yield the stability assertion made at the end of \S \ref{Statement} by just considering the
operator $\TT$ involving sufficiently small initial data.

\numberwithin{theorem}{section}
\numberwithin{prop}{section}
\numberwithin{lemma}{section}
\numberwithin{remark}{section}
\appendix

\section{Modified Fermi Coordinates}\label{AppA}
\label{add}

In this appendix we present the proof of Lemma \ref{L.modferm}.

\begin{proof}
For the proof, we will denote the modified Fermi coordinates as
\[
(\py, \pz) = (\py_0, \py', \pz) = (\py_0,\ldots, \py_{n-1},\pz)
\]
(denoted $(y,z)$ in the statement of the lemma and elsewhere in this paper),
and we will reserve $(y,z)$ for Fermi coordinates associated to
a canonical local parametrization $(y,z)\in [0,T]\times V_l\times(-\delta, \delta)\mapsto Y_l(z)+z\nu(y)$, as constructed in Section \ref{coords}.
We specify that the relationship between modified and Fermi coordinates has the form
\[
(y_0, y', z) = (y_0(\py, \pz), \py', \pz)
\]
where $y_0$ depends on $(\py,\pz)$ in a way to be described below.
Thus they are related to $(x,t)$ coordinates
via
\[
(x,t) = Y_l(y_0(\py, \pz), \py', \pz) + \pz \nu(y_0(\py, \pz) ,\py' ) =: \Phi_l(\py,\pz)
\]
for
\[
(\py,\pz)\in [0,T_1]\times V_l\times (-\delta_1,\delta_1)
\]
We will also write
\begin{align*}
\widetilde Y_a
&:= \frac{\partial \Phi}{\partial\py_a} 
\quad\mbox{ for }a=0,\ldots, n-1
\\
\widetilde Y_n
&:= \frac{\partial \Phi}{\partial\pz}  
\\
\tg_{\a\b}
&:= \metric{Y_\a,Y_\b},
\qquad \mbox{ for }\a, \b = 0, \ldots, n.
\end{align*}

\medskip

To define $y_0(\py, \pz)$, fix some $l$ and consider  $Y_l : \Lambda_l\to \Gamma$
as in \eqref{clp}.
We will often omit the subscript $l$, and we will write
\[
(t(y,z), x(y,z)) = Y(y) + z\nu(y)
\]
to indicate the dependence of $(x,t)$ on $(y,z)$.
Recall that
$Y_l$ is constructed to that
$t(y, 0) = t(y_0,y', 0) = y_0$, see \eqref{canonical.coords}, and hence $\frac{\partial t}{\partial y_0}(y, 0) = 1$ everywhere in $\Lambda_l$.
Thus the Implicit Function Theorem implies that
for any $T_1<T$ there exists $\delta_1<\delta$ and a function $\eta_0:[0,T_1]\times V_l\times (-\delta_1,\delta_1)\to [0,T]$ such that
\[
\eta_0(\py, 0 ) = \py_0, \qquad t(\eta_0(\py, \pz) , \py', \pz) = \py_0 \qquad\mbox{ everywhere in }[0,T_1]\times V_l\times (-\delta_1,\delta_1).
\]
Here we are implicitly using our assumption that
the velocity of $\Gamma$ vanishes at $t=0$.
For $Y\in \Gamma^0$ and $|z|<\delta$,
this implies that  $Y+z\nu(Y)$ belongs to $\{0\}\times \R^n$,
and hence that
$t(0,\py', \pz)= 0$
for all $(\py',\pz)\in V_l\times (-\delta, \delta)$.
It is this property that allows us to extend the domain of $\eta_0$ all the way to $\{y_0=0\}$,
and it implies that $\eta_0(0,\py', \pz) = 0$
for all $(\py', \pz)$.

We can choose $\delta_1$ such that the above properties hold for all $\Lambda_l$,
$l=1,\ldots, m$.

We will take $y_0(\py, \pz)$ to have the form
\[
y_0(\py, \pz) = \chi_0(\pz)  \py_0 + (1- \chi_0(\pz)) \eta_0(\py, \pz)
\]
where $\chi_0$ will be specified below in the proof of \eqref{gab.signs}.
It will be the case that
\[
\chi_0(\pz) = 1 \mbox{ if }|\pz|< r_2, \qquad
\chi_0(\pz) = 0 \mbox{ if }|\pz| > r_1,
\]
for  $0<r_2< r_1<\delta_1$ also to be fixed below.
We will  require that $r_1, r_2, \chi_0$ are chosen
uniformly for $l=1,\ldots, m$, so that \eqref{ind.of.l} holds.

It is immediate that  $(\py,\pz) = (y,z)$, and hence $\tg_{\a\b} = g_{\a\b}$ for $(\py,\pz)\in [0,T_1]\times V\times (-r_2,r_2)$.
Thus \eqref{ggl.gab}  follows from the corresponding properties
of Fermi coordinates, see \eqref{basic.fermi}.
Similarly, \eqref{dz.gnn} is a basic property of Fermi coordinates,
together with the fact that $\Gamma$ is minimal, see \eqref{mean} and \eqref{H}
Likewise, \eqref{y0=tbis} is a straightforward consequence of the definition of $y_0(\py, \pz)$.

It remains only to prove \eqref{gab.signs}.
To do this we note that
\[
\widetilde Y_0 = \frac\pp {\partial \py_0}(Y + \pz \nu) =
 \frac\pp {\partial y_0}(Y + \pz \nu) \frac{\pp y_0}{\pp \py_0} = Y_0  \frac{\pp y_0}{\pp \py_0}.
\]
and similarly
\[
\widetilde Y_i = Y_0 \frac{\partial y_0}{\partial \py_i} + Y_i \ \ \ \mbox{ for }i=1,\ldots,n
\]
where here and below,  we sometimes write $\pz$ as $\py_n$.
It follows that
\begin{align}
\tg_{00}&= (\frac{\pp y_0}{\pp \py_0})^2 g_{00},
\nonumber\\
\tg_{0i} = \tg_{i0}&=\frac {\pp y_0}{\pp \py_0}\frac {\pp y_0}{\pp \py_i} g_{00},
\label{gvsg}\\
\tg_{ij}&=\frac {\pp y_0}{\pp \py_i}\frac {\pp y_0}{\pp \py_j} g_{00} + g_{ij}
\nonumber
\end{align}
for $i,j = 1,\ldots, n$.

The fact that
$\tg_{00}<0$ everywhere now follows from \eqref{gvsg} and the corresponding property of Fermi coordinates.

We next prove that
$ \left[ \begin{array}{r}  \tg_{ij}
\end{array}
\right]_{i,j=1}^{n}$
is positive definite.
For $|\pz|\le r_2$
this follows from standard properties of Fermi coordinates.
For $|\pz| \ge  r_1$ it is also straightforward. Indeed, in this set, for every
fixed $\py_0$, the map $(\py',\pz)\to Y+\pz \nu$
is just a parametrization of a portion of the hypersurface
$\{ y_0\}\times \R^n$, on which the
induced metric is simply the Euclidean metric. So
in this set, the metric tensor $ \left[ \begin{array}{r}  \tg_{ij}
\end{array}
\right]_{i,j=1}^{n}$ is
just the Euclidean metric rewritten with respect to a new coordinate system.
Hence it is clearly positive definite.

We now consider $r_2<|z|<r_1$. We start with the main point which, it turns out,
is to fix
$r_1, r_2$ and $\chi_0$ so that $\tg_{nn} $ is bounded away from zero.
Since $g_{nn} = 1$, as recalled in the proof of \eqref{ggl.gab},
\begin{equation}\label{gnnn}
\tg_{nn}=  1 + (\frac{\pp y_0}{\pp \pz})^2 g_{00} \ .
\end{equation}
For fixed $\py = (\py_0,\py')$, consider the curve
\[
\pz\mapsto  Y(\eta_0(\py, \pz), \py',\pz) + \pz \nu(\eta_0(\py, \pz), \py',\pz)  =: \Phi_0(\py, \pz)
\]
and let $X$ denote the tangent vector $\pp \Phi_0 /\pp \pz$.
The definition of $\eta_0$ implies that the image of the curve is contained in
the hypersurface $\{\py_0\}\times \R^n$, and hence that $X$
is spacelike, or in other words that $\metric{X,X}>0$.
By compactness,  after poisbly shrinking $\delta_1$
there exists some $c>0$ such that
$\metric{X,X} \ge c$ everywhere in  $[0,T_1]\times V\times (-\delta_1,\delta_1)$.
Writing out this inequality in
coordinates, and again using the fact that $g_{nn}=1$, we obtain
\begin{equation}\label{hatgnn}
1+ (\frac{\pp \eta_0}{\pp \pz} (\py,\pz))^2\  g_{00}(\eta_0(\py,\pz),\py',\pz) \ge c .
\end{equation}
Next, since $\py_0 = \eta_0(\py_0,0) = \eta_0(\py,\pz) - \pz \partial_\pz\eta_0(\py,\pz) + O(\pz^2) $,
we use the definition of $y_0(\py, \pz)$ to compute
\begin{align*}
\frac{\pp y_0}{\pp \pz}(\py,\pz)
&=  \chi'_0(\pz)(\py_0-\eta_0(\py,\pz)) +(1-\chi_0(\pz))\frac{\pp \eta_0}{\pp \pz}(\py,\pz)\\
&= \frac{\pp \eta_0}{\pp \pz}(\py,\pz)( 1 - \chi_0(\pz) - \pz \chi_0'(\pz)) + O(\pz^2\chi_0'(\pz)) .
\end{align*}

We now take $\chi_0$ of the form
\[
\chi_0(\pz) = \begin{cases}
1&\mbox{ if }\pz\le r_2 = r_1^2/(1+ r_1) \\
r_1( \frac {r_1}{\pz}-1)&\mbox{ if }r_2\le \pz \le r_1\\
0&\mbox{ if }\pz\ge r_1
\end{cases}.
\]
for $r_1>0$ to be chosen below. (More precisely, we take $\chi_0$
to be a regularization of the function defined above, and satisfying essentially
the same estimates. But for simplicity we will compute with the function
defined above, which is merely Lipschitz.)
With this choice,
$ - \chi_0(\pz) - \pz \chi_0'(\pz) = r_1$ on the support of $\chi_0'$, so
\[
\frac{\pp y_0}{\pp \pz}(\py,\pz)
= \partial_\pz\eta_0(\py,\pz)(1+ r_1) + O(r_1^2).
\]
We also observe that
$|\eta_0(\py,\pz) - y_0(\py,\pz)|\le C|\pz|$,
because  $\eta_0(\py,0) = y_0(\py,0) = \py_0$.
It follows that
\[
g_{00}(y_0(\py,\pz), \py',\pz) = g_{00}(\eta_0(\py,\pz), \py',\pz) + O(|\pz|).
\]
By combining these with \eqref{gnnn}, \eqref{hatgnn}, we  find that
\[
\tg_{nn} (\py,\pz) \ge c  -  C r_1
\]
for $C$ depending on $\| g_{00}\|_{W^{1,\infty}}$ and $\|\partial_\pz\eta_0\|_{L^\infty}$.
It follows that
\begin{equation}\label{gnn.lbd}
\tg_{nn}\ge c/2 \qquad\mbox{ at all points where $r_2\le z \le r_1$.}
\end{equation}
for all sufficiently small choices of  $r_1$ (and hence $r_2$) in the definition of $\chi_0$.

We next remark that since $\eta_0(\py,0) = \py_0$ for all $\py$, it is clear that
$\frac{\partial \eta_0}{\partial \py_i}(\py,0) = 0$ for $i=1,\ldots, n-1$.
It follows that
\[
|\frac{\partial \eta_0}{\partial \py_i}(\py,\pz)|\le  C|\pz|\ \ \ \mbox{ for }i=1,\ldots, n-1,
 \qquad
\qquad
|\eta_0(\py,\pz) - \py_0|\le C|\pz|,
\]
everywhere in its domain, and hence that the same properties hold for $y_0(\py,\pz)$.
We then see from  \eqref{gvsg} that for $r_2\le |\pz|\le r_1$,
\begin{align*}
|\tg_{ij}- g_{ij}|  &\le Cr_1^2
\qquad\mbox{for $1\le i,j\le n-1$ }, \ \ \mbox{ and }
\\
|\tg_{in}| = |\tg_{ni}| &\le C r_1 \qquad\mbox{for $1\le i\le n-1$}.
\end{align*}
Since  $\left[ \begin{array}{r}  g_{ij}
\end{array}
\right]_{i,j=1}^{n-1}$ is positive definite,
we conclude from this and \eqref{gnn.lbd} that $ r_1$ may be chosen so that
$ \left[ \begin{array}{r}  \tg_{ij}
\end{array}
\right]_{i,j=1}^{n}$ is positive definite everywhere.

Finally, the facts that $\tg_{00}<0$ and $ \left[ \begin{array}{r}  \tg_{ij}
\end{array} \right]_{i,j=1}^n$ is positive definite
imply the same properties for $\tg^{00}$ and $ \left[ \begin{array}{r}  \tg^{ij}
\end{array}
\right]_{i,j=1}^n$.
This is a consequence of the general formula for the inverse of a matrix in block form
\[
\left(\begin{array}{ll}
a & b\\
b^T& B
\end{array}
\right)^{-1}  =
\left(\begin{array}{cc}
(a- b B^{-1} b^T)^{-1} &- a^{-1}b (B  - b^Ta^{-1}b)^{-1}\\
-B^{-1}b^T(a-bB^{-1}b^T)^{-1}& (B  - b^Ta^{-1}b)^{-1}
\end{array}
\right) ,
\]
where $a\in \R$ and $b , B$ are $1\times n$ and $n\times n$ matrices respectively.
This formula can be checked by multiplying the right-hand side
by $\left(\begin{array}{ll}
a & b\\
b^T& B
\end{array}
\right)$.
\end{proof}

\medskip
\noindent
{\bf Acknowledgements:}
 M.~del Pino and M. Musso have been  partly supported by grants
 Fondecyt  1160135,  1150066, Fondo Basal CMM. The  research  of R. Jerrard is partially supported by NSERC of Canada.

\medskip

\end{document}